\newtheorem{thm}{Theorem}[section]
\newtheorem{cor}[thm]{Corollary}
\newtheorem{lem}[thm]{Lemma}
\newtheorem{prop}[thm]{Proposition}
\newtheorem{rem}[thm]{Remark}
\newtheorem{Remark}{Remark}
\numberwithin{equation}{section}
\newcommand {\be}{\begin{equation}}
\newcommand {\ee}{\end{equation}}
\begin{document}

\title{Riemannian manifolds with entire  Grauert  tube are rationally elliptic}
\author{Xiaoyang Chen\footnotemark[1]}
\renewcommand{\thefootnote}{\fnsymbol{footnote}}
\footnotetext[1]{School of Mathematical Sciences, Institute for Advanced Study, Tongji University, Shanghai, China. email: $xychen100@tongji.edu.cn$.
}

\date{}

\maketitle

\begin{abstract}
It was conjectured by Bott-Grove-Halperin that a compact simply connected  Riemannian manifold $M$ with nonnegative sectional curvature is rationally elliptic.
We confirm this conjecture under the stronger assumption that $M$  has  entire Grauert tube, i.e.,
$M$ is a real analytic Riemannian manifold that has
 a unique adapted complex structure defined on the whole tangent bundle $TM$.

\end{abstract}

\section{Introduction}
The following conjecture formulated by Bott-Grove-Halperin  is a central problem in the study of Riemannian manifolds with nonnegative
sectional curvature \cite{Be, GH}.

Conjecture: A compact simply connected  Riemannian manifold $M$ with nonnegative sectional curvature is rationally elliptic.

Here $M$ is said to be rationally elliptic if and only if it has finite dimensional rational homotopy groups,  i.e., all but finitely many homotopy groups of $M$ are finite, otherwise $M$ is said to be rationally hyperbolic. It is a well-known simple consequence of Sullivan's minimal model theory
that $M$ being rationally elliptic is equivalent to polynomial growth of the sequence of Betti numbers of its based loop space $\Omega M$
relative to rational coefficient.
If $M$ is rationally elliptic, then there are severe topological restrictions of $M$. For example, $M$ has nonnegative Euler characteristic number and $ dim H_*(M, \mathbb{Q})\leq 2^n$ \cite{H, GH}.

\par It is known that compact simply connected homogeneous spaces and cohomogeneity one manifolds
 are rationally elliptic \cite{GH2}.
 In \cite{GB}, they confirmed  Bott-Grove-Halperin  conjecture under
 the additional assumption that $M$ supports an isometric action with orbits of codimension two.

 \par
In this paper we confirm  Bott-Grove-Halperin  conjecture under the stronger assumption that $M$ has entire  Grauert  tube:
\begin{thm} \label{main}.
Let $(M,g)$ be a $n$-dimensional compact  simply connected real analytic Riemannian manifold that has entire Grauert tube, then $M$ is rationally elliptic.
\end{thm}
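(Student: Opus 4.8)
The plan is to reduce Theorem~\ref{main} to an estimate on the number of geodesics joining two generic points, and then to invoke Gromov's entropy inequality together with the rational-homotopy dichotomy of F\'elix--Halperin--Thomas. First recall the analytic content of the hypothesis: by the theory of adapted complex structures and the homogeneous complex Monge--Amp\`ere equation (Lempert--Sz\H{o}ke, Guillemin--Stenzel), $(M,g)$ admitting an entire Grauert tube means that the complexified geodesic flow on $TM$ is globally defined; in particular, for every $p\in M$ the complexified exponential map $\mathrm{Exp}_p\colon T_pM\otimes_{\R}\C\cong\C^n\to (TM,J)$ extends to an \emph{entire} holomorphic map, whose restriction to the real locus $T_pM$ is the Riemannian exponential map $\exp_p$ (with image the totally real zero section $M\subset TM$). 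Hence the geodesics of length $\le T$ from $p$ to a point $q\in M$ are exactly the real points of the complex-analytic set $\mathrm{Exp}_p^{-1}(q)$ lying in the ball of radius $T$ about the origin of $\C^n$.

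The heart of the matter is a growth estimate. The entire-tube condition says that along every unit-speed geodesic $\gamma$ the normal Jacobi fundamental solution continues to an entire matrix-valued function $A_\gamma(z)$ whose determinant vanishes only on $\R$ (the real zeros of $\det A_\gamma$ being the conjugate points of $\gamma$). Using the compactness of $M$ — hence uniform bounds on the curvature tensor and its covariant derivatives — I would show that $\mathrm{Exp}_p$ and all the $A_\gamma$ are entire of \emph{finite order}, with order and type bounded independently of $p$ and $\gamma$ (one checks directly that the order is $1$ for the compact rank one symmetric spaces). This should follow from a Gronwall-type estimate for the complexified geodesic and Jacobi equations, whose coefficients, though non-constant, are uniformly controlled on the compact manifold. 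I expect this to be the main obstacle: it is exactly here that the globally defined real-analytic structure must be converted into a quantitative bound, and controlling the nonlinear complexified geodesic equation along complex rays is delicate.

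Granting the finite-order bound, value-distribution theory for entire holomorphic maps (Jensen's formula and Lelong--Nevanlinna counting, together with a Sard--Bertini argument ensuring that for generic $q$ the fibre $\mathrm{Exp}_p^{-1}(q)$ is zero-dimensional and the associated geodesics are non-degenerate) yields a polynomial bound $O(T^{N})$ on the number of geodesics of length $\le T$ from $p$ to $q$. By Ma\~n\'e's entropy formula, this forces the topological entropy of the geodesic flow of $g$ to vanish. On the other hand, Gromov's theorem on the homotopical effects of dilatation (see also Paternain's book) asserts that the geodesic flow of \emph{any} metric on a compact simply connected rationally hyperbolic manifold has positive topological entropy, because the Betti numbers $\dim H_\ast(\Omega^{\le T}M;\mathbb{Q})$ of its sublevel loop spaces then grow exponentially in $T$. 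Therefore $M$ is not rationally hyperbolic, and by the dichotomy of F\'elix--Halperin--Thomas — a compact simply connected manifold is either rationally elliptic or rationally hyperbolic — it is rationally elliptic, which proves Theorem~\ref{main}. (A more direct but possibly harder route would be to show first that an entire Grauert tube forces every geodesic of $M$ to be closed, and then to apply the Bott--Samelson theorem, which would give that $H^\ast(M;\mathbb{Q})$ is generated by a single element and hence that $M$ is rationally a compact rank one symmetric space.)
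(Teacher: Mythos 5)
Your overall reduction (count geodesics of length $\le T$, show polynomial growth, conclude non-hyperbolicity via Gromov/Paternain and the elliptic--hyperbolic dichotomy) is the same skeleton as the paper, but the step you yourself flag as ``the main obstacle'' is a genuine gap, and it is not where the actual proof lives. First, the claim that an entire Grauert tube gives an entire holomorphic extension of $\exp_p$ to $T_pM\otimes\C\cong\C^n$ of \emph{finite order}, to be proved ``by a Gronwall-type estimate for the complexified geodesic and Jacobi equations,'' does not work as stated: the holomorphic extension of the Jacobi data does not come from solving a complexified ODE (the geodesic itself is only defined for real time; there is no complexified equation to run Gronwall on), it comes from the holomorphicity of the leaves $\phi_\gamma(\C)$ under the adapted complex structure (Lempert--Sz\H{o}ke), and nothing in that mechanism yields a finite-order bound, let alone one uniform in $p$ and $\gamma$. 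Second, even granting finite order, a per-fibre polynomial count of $\mathrm{Exp}_p^{-1}(q)\cap B_T$ for a map $\C^n\to\C^n$ is not a consequence of Jensen's formula; one needs multidimensional value distribution plus genericity of $q$, and then one still only counts complex preimages, of which the geodesics are the real ones. So the quantitative heart of the argument is missing.

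What the paper actually exploits is not growth order but \emph{positivity}: writing the normal Jacobi fields along $\gamma$ through the parallel frames $\xi_j,\eta_j$ on the leaf, the adapted complex structure makes $f=(f_{jk})$ (defined by $\eta_k|_\R=\sum_j\phi_{jk}\xi_j|_\R$) extend meromorphically to all of $\C$ with poles on $\R$, symmetric, with $f(0)=0$, $f'(0)=\mathrm{Id}$ and $\operatorname{Im} f>0$ on $\C^+$; hence $-f_1^{-1}$ is a matrix-valued Pick/Herglotz function. Fatou's representation theorem then gives $(-f_1^{-1})'(\sigma)=\sigma^{-2}\,\mathrm{Id}+B(\sigma)$ with $B(\sigma)\succeq 0$, and since $\det\bigl(g(J_j(\sigma),J_k(\sigma))\bigr)=1/\det\bigl((-f_1^{-1})'(\sigma)\bigr)$, the Minkowski determinant inequality yields the pointwise bound $\det\bigl(g(J_j,J_k)\bigr)\le\sigma^{2n-2}$ for \emph{every} geodesic and every $\sigma$ -- no genericity, no uniformity argument, no value distribution. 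Plugging this into the Berger--Bott formula gives $\int_M n_T(x,y)\,dy\le p(T)$ with $\deg p\le n$, and Gromov's inequality then gives polynomial growth of $\sum_{j<k}\dim H_j(\Omega M,F)$ for any field $F$, i.e.\ topological (not merely rational) ellipticity; your detour through Ma\~n\'e's formula and topological entropy would only give the rational statement. Finally, your parenthetical alternative is a dead end: an entire Grauert tube does not force all geodesics to be closed (e.g.\ product metrics such as $S^2\times S^2$ have entire tubes but non-closed geodesics), so the Bott--Samelson route is unavailable.
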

\begin{rem}
In fact, our proof shows that $M$ is topologically elliptic, i.e. the Betti
numbers of its loop space relative to any field of coefficients grow at most
polynomially.
\end{rem}

 Here $(M,g)$ is said to be real analytic if $M$ is a real analytic manifold with a real analytic Riemannian metric $g$.
 Then there is a unique adapted complex structure defined on $T^R M=\{{v\in TM| g(v,v)<R^2}\}$ for some $R>0$ \cite{GS, Ls, Sz}.
 When $R=\infty$, then $M$ is said to have entire  Grauert  tube.
  It was shown in \cite{Ls} that a Riemannian manifold with  entire  Grauert  tube has nonnegative sectional curvature.
 Moreover,  Aguilar showed that the quotient of a Riemannian manifold with entire Grauert tube by a group of isometries acting freely also has entire Grauert tube \cite{Ag}.
  All known manifolds with entire Grauert tube are obtained by Aguilar's construction: starting with a compact Lie group with a bi-invariant metric, or the product of such a group with Euclidean space, one takes the quotient by some group of isometries acting freely. Such quotient manifolds include almost all closed manifolds which are known to have Riemannian metrics with nonnegative sectional curvature.

 \par  It was conjectured by Hopf that the Euler characteristic number of a compact Riemannian manifold with nonnegative
 sectional curvature  is nonnegative. The following corollary settles this conjecture under the stronger assumption that $M$ has entire  Grauert  tube.
 \begin{cor} \label{euler}
 Let $M$ be a compact Riemannian manifold with entire Grauert tube. Then $M$ has nonnegative Euler characteristic number.
 \end{cor}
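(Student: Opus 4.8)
The plan is to deduce the corollary from Theorem~\ref{main} by passing to the universal cover. Since $M$ carries an entire Grauert tube it is in particular real analytic, and by \cite{Ls} it has nonnegative sectional curvature. Pulling the adapted complex structure on $TM$ back through the local diffeomorphism $d\pi$ associated to the universal Riemannian covering $\pi\colon\widetilde M\to M$ produces a complex structure defined on all of $T\widetilde M$ which is again adapted (geodesics lift to geodesics), so $\widetilde M$ also has entire Grauert tube. Applying the Cheeger--Gromoll splitting theorem to the complete manifold $\widetilde M$, which has $\sec\ge0$ and admits a cocompact isometric $\pi_1(M)$-action, one gets an isometric splitting $\widetilde M=\R^k\times N$ with $N$ compact, and $N$ is simply connected because $\widetilde M$ is.

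Next I would observe that $N$ itself has an entire Grauert tube. Indeed, $N\cong\{0\}\times N$ is a totally geodesic submanifold of $\widetilde M=\R^k\times N$, so $TN$ sits inside $T\widetilde M$ as a complex submanifold and inherits the adapted complex structure of $N$; being the restriction of a structure defined on all of $T\widetilde M$, it is defined on all of $TN$. (Equivalently, the adapted complex structure of a Riemannian product is the product of the adapted complex structures, so the maximal domain of the product is the product of the maximal domains.) As $N$ is also compact, simply connected and real analytic, Theorem~\ref{main} applies and shows $N$ is rationally elliptic, hence $\chi(N)\ge0$. If $k=0$ we are finished: then $\widetilde M=N$ is compact, $\pi_1(M)$ is finite, and $\chi(M)=\chi(N)/|\pi_1(M)|\ge0$.

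It remains to treat $k\ge1$, where I expect $\chi(M)=0$. Put $\Gamma=\pi_1(M)$, acting freely, properly discontinuously and cocompactly on $\R^k\times N$. Since $N$ is compact and simply connected its de Rham decomposition has no Euclidean factor, so $\Gamma$ preserves the splitting and $\mathrm{Isom}(\R^k\times N)=\mathrm{Isom}(\R^k)\times\mathrm{Isom}(N)$; the image of $\Gamma$ in $\mathrm{Isom}(\R^k)$ is then a crystallographic group, the kernel of this projection acting freely on the compact $N$ and hence being finite. By Bieberbach's theorem this crystallographic group contains a finite-index subgroup of translations by a lattice $\Lambda\cong\Z^k$; its preimage $\Gamma'\le\Gamma$ has finite index, contains a finite normal subgroup $F$ with $\Gamma'/F\cong\Z^k$, and the associated finite cover $\overline M=(\R^k\times N)/\Gamma'\cong\bigl(\R^k\times(N/F)\bigr)/\Z^k$ is the total space of a fibre bundle over $T^k=\R^k/\Lambda$ with fibre $N/F$. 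By multiplicativity of the Euler characteristic in fibre bundles, $\chi(\overline M)=\chi(T^k)\,\chi(N/F)=0$ since $k\ge1$; and since $\overline M\to M$ is a finite cover, $\chi(M)=\chi(\overline M)/[\Gamma:\Gamma']=0$.

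The genuine content is Theorem~\ref{main} itself; the rest is a comparatively routine reduction. The two steps that need care are the claim that the compact de Rham factor $N$ again has an \emph{entire} Grauert tube --- which is what makes Theorem~\ref{main} applicable, and rests on the behaviour of the adapted complex structure under restriction to totally geodesic submanifolds (equivalently, under Riemannian products) --- and, for $k\ge1$, the standard Bieberbach-type argument producing the torus fibration of a finite cover.
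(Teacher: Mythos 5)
Your argument is correct, and for finite fundamental group it coincides with the paper's: lift the entire Grauert tube to the universal cover and apply Theorem~\ref{main} there, then use multiplicativity of $\chi$ under the finite covering. Where you diverge is the infinite-$\pi_1$ case: the paper simply quotes Cheeger--Gromoll \cite{CG} for the fact that a compact manifold of nonnegative sectional curvature with infinite fundamental group has vanishing Euler characteristic, whereas you reprove this from scratch via the splitting $\widetilde M=\R^k\times N$, the Bieberbach theorem, and the resulting finite cover fibering over $T^k$. That unpacking is fine and self-contained, but note that it makes your middle step --- the claim that the compact de Rham factor $N$ again has an \emph{entire} Grauert tube, justified only by an appeal to the (true but here unproved) product behaviour of adapted complex structures --- entirely superfluous: when $k\ge 1$ your torus-fibration argument gives $\chi(M)=0$ without any information about $N$, and when $k=0$ one has $N=\widetilde M$, so no restriction-to-a-factor statement is ever needed. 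Dropping that paragraph would make your proof both shorter and strictly rigorous; as written, it is the one place where you assert more than you establish, even though nothing downstream depends on it.
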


\begin{proof}
If $M$ has finite fundamental group, then its universal cover $\widetilde{M}$ with the induced Riemannian metric also has entire Grauert tube.
By Theorem \ref{main}, the Euler characteristic number of $\widetilde{M}$ is nonnegative. Hence $M$ has  nonnegative Euler characteristic number.
If $M$ has infinite fundamental group, as $M$ has nonnegative sectional curvature, then the Euler characteristic number of $M$ is zero
\cite{CG}.
\end{proof}

 \par A related conjecture proposed by Totaro predicts that a compact Riemannian manifold $M$ with nonnegative sectional curvature has a good complexification,
i.e., $M$ is diffeomorphic to a smooth affine algebraic variety $U$ over the real number such that the inclusion $U(\mathbb{R})
\rightarrow U(\mathbb{C})$  is a homotopy equivalence. The Euler characteristic number of a compact manifold which has a good complexification
is also nonnegative \cite{To}. Also, a conjecture by Burns \cite{BL} predicts that for every compact Riemannian manifold $M$ with entire Grauert tube,
the complex manifold $TM$ is an affine algebraic variety in a natural way. If this is correct, the complex manifold $TM$ would be a good complexification
of $M$ in the above sense. Both conjectures of Totaro and Burns are still open.

\par The proof of Theorem \ref{main} is based on the counting function introduced in \cite{Be,Gr,P}.
 For $x \in M$ and each $T >0$, let
 $$D_T :=\{{ v \in T_x M| g(v,v)\leq T^2}\}$$
 be the disk of radius $T$ in $T_x M$. Define the counting function $n_T (x, y)$ by
 $$n_T (x, y):=\sharp ((exp_x)^{-1}(y)\cap D_T ).$$
 In other words, $n_T (x,y)$ counts the number of geodesic arcs joining $x$ to $y$ with length $\leq T$.
When $M$ is simply connected, then we have the following crucial inequality \cite{Gr,P}:
\begin{equation} \label{ine1}
\sum_{j=0}^{k-1} dim H_j (\Omega M, F) \leq \frac{1}{Vol_g (M)}\int_{M} n_{Ck} (x, y) dy,
\end{equation}
where $C$ is a positive constant independent of $k$ and $F$ is any field of coefficients.

\par For any $x \in M$, Berger and Bott proved that $\int_{M} n_{T} (x, y) dy$ can be computed by Jacobi fields on $M$ \cite{Be,P}. Precisely, they showed that
\begin{equation} \label{in1}
\int_M n_T (x, y) dy =\int_0^T  d \sigma \int_{\mathbb{S}} \sqrt{det (g(J_j (\sigma), J_k (\sigma)))_{j,k=1,2, \cdots, n-1}}d\theta,
\end{equation}
where $\mathbb{S}$ is the unit sphere of $T_x M$.  Moreover,
$J_j, j=1, 2, \cdots, n-1$ are Jacobi fields along the unique geodesic $\gamma$ determined by $\theta \in \mathbb{S}$ (i.e. $\gamma(0)=x, {\gamma}'(0)=\theta$) with initial conditions
$$J_j(0)=0$$
$$ {J}'_j (0)=v_j,$$
where $v_j, j=1, 2 \cdots n-1$ is an orthonormal basis of $T_{\theta} \mathbb{S}$.

\par If $(M, g)$ has entire  Grauert tube, the right hand side in \ref{in1} can be further described by a  matrix valued holomorphic function on the upper half plane.
Applying Fatou's representation theorem to this function, we will show that $\int_M n_T (x, y) dy $ is a polynomial  function of $T$. When $M$ is simply connected,
 it follows that $\sum_{j=0}^{k-1} dim H_j (\Omega M, F)$
has polynomial growth for any field of coefficients. Hence $M$ is topologically elliptic.

\par We finally mention that based on  an iterated use of the Rauch comparison theorem for Jacobi fields, an estimate for the Betti numbers of
 $\Omega M$ for manifolds with $0 < \delta  \leq  sec M \leq 1$ was derived in \cite{Be}. Although the estimate is given in terms of the pinching
 constant $\delta$, its growth rate is exponential.

\section*{Acknowledgements} The author is partially supported by National Natural Science Foundation of China No.11701427
and the Fundamental Research Funds for the Central Universities.

\section{Vertical and horizontal subbundles}
In this section we recall some basic facts on the geometry of the tangent bundle $TM$. For more details, see \cite{P}.
\par Let $\pi: TM \rightarrow M$ be the canonical projection, i.e., if $\theta=(x,v) \in TM$, then $\pi(\theta)=x$. There exists a canonical
subbundle of $TTM$ called the vertical subbbundle whose fiber at $\theta$ is given by the tangent vectors of curves $\sigma: (-\epsilon, \epsilon) \rightarrow
TM$ of the form: $\sigma(t)=(x, v+t \omega)$, where $\omega \in T_x M$. In other words,
$$V(\theta)=ker ((\pi_*)_{\theta}).$$

\par Suppose that $M$  is endowed with a Riemannian metric $g$. We shall define the connection map
$$K: TTM \rightarrow TM$$
as follows: let $\xi \in T_{\theta} TM$ and $z: (-\epsilon, \epsilon)\rightarrow TM$ be an adapted curve to $\xi$, that is, with initial conditions as follows:
$$z(0)=\theta$$
$$ {z}'(0)=\xi.$$
such a curve gives rise to a curve $\alpha: (-\epsilon, \epsilon) \rightarrow M, \alpha:=\pi \circ z$ and a vector field $Z$ along $\alpha$, equivalently,
$z(t)=(\alpha(t), Z(t)).$
\par Define
$$K_{\theta}(\xi):=(\nabla_{\alpha}Z) (0)=\lim_{t \rightarrow 0} \frac{(P_t)^{-1}Z(t)-Z(0)}{t},$$
where $P_t: T_x M \rightarrow T_{\alpha(t)} M$ is the linear isomorphism defined by the parallel transport along $\alpha$.
The horizontal subbundle is the subbundle of $TTM$ whose fiber at $\theta$ is given by
$$H(\theta)=ker K_{\theta}.$$
Another equivalent way of constructing the horizontal subbundle is by means of the horizontal lift
$$L_{\theta}: T_x M \rightarrow T_{\theta} TM,$$
which is defined as follows: let $\theta=(x, v)$. Given $\omega \in T_x M$ and $\alpha: (-\epsilon, \epsilon) \rightarrow M$ an adapted curve of $\omega$, i.e.,
$\alpha(0)=x, {\alpha}'(0)=\omega$. Let $Z(t)$ be the parallel transport of $v$ along $\alpha$ and $\sigma: (-\epsilon, \epsilon) \rightarrow TM$
be the curve $\sigma(t)=(\alpha(t), Z(t))$. Then
$$L_{\theta}(w)={\sigma}'(0)\in T_{\theta}TM.$$
\begin{prop}\label{jac}
$K_{\theta}$ and $L_{\theta}$ have the following properties:
$$ (\pi_*)_{\theta} \circ L_{\theta}=Id$$
$$K_{\theta} \circ i_*=Id,$$
where $i: T_x M \rightarrow TM $ is the inclusion map. Moreover,
$$T_{\theta}TM= H(\theta)\oplus V(\theta)$$
and the map $j_{\theta}: T_{\theta}TM  \rightarrow T_x M \times T_x M$ given by
$$j_{\theta} (\xi)=( (\pi_*)_{\theta} (\xi), K_{\theta}(\xi))$$
is a linear isomorphism.
\end{prop}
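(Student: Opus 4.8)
The plan is to verify each assertion by choosing a convenient adapted curve and then assembling the splitting by a dimension count. First I would record that $K_\theta$ is well defined and linear: if $z(t)=(\alpha(t),Z(t))$ and $\tilde z(t)=(\tilde\alpha(t),\tilde Z(t))$ are two curves adapted to the same $\xi\in T_\theta TM$, then $\alpha(0)=\tilde\alpha(0)=x$, $\alpha'(0)=\tilde\alpha'(0)$, $Z(0)=\tilde Z(0)=v$ and $Z'(0)=\tilde Z'(0)$, and since $(\nabla_\alpha Z)(0)$ depends only on these data it takes the same value for both curves; linearity in $\xi$ then follows from the Leibniz rule for $\nabla$. (This is carried out in detail in \cite{P}.)

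For the first identity: given $\omega\in T_xM$, take an adapted curve $\alpha$ with $\alpha(0)=x$, $\alpha'(0)=\omega$, let $Z(t)$ be the parallel transport of $v$ along $\alpha$, and put $\sigma(t)=(\alpha(t),Z(t))$, so that $L_\theta(\omega)=\sigma'(0)$. Since $\pi\circ\sigma=\alpha$, we get $(\pi_*)_\theta(L_\theta(\omega))=(\pi\circ\sigma)'(0)=\alpha'(0)=\omega$, which is the first identity. For the second identity: given $\omega\in T_xM$, the curve $z(t)=(x,v+t\omega)$ is adapted to $i_*(\omega)$; its base curve $\alpha\equiv x$ is constant, so the parallel transport $P_t$ along $\alpha$ is the identity, and therefore $K_\theta(i_*(\omega))=(\nabla_\alpha Z)(0)=Z'(0)=\omega$, where $Z(t)=v+t\omega$.

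Next I would establish the splitting. Because $\pi\circ i$ is the constant map with value $x$, the image of $i_*\colon T_xM\to T_\theta TM$ lies in $\ker(\pi_*)_\theta=V(\theta)$; since $i$ is an immersion this image has dimension $n$, while $\dim V(\theta)=2n-n=n$ because $\pi$ is a submersion and hence $(\pi_*)_\theta$ is onto, so $V(\theta)=i_*(T_xM)$. The second identity shows $K_\theta\circ i_*=\mathrm{Id}$, hence $K_\theta$ is onto and $\dim H(\theta)=\dim\ker K_\theta=n$. If $\xi\in H(\theta)\cap V(\theta)$, write $\xi=i_*(\omega)$; then $0=K_\theta(\xi)=K_\theta(i_*(\omega))=\omega$, so $\xi=0$. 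Thus $H(\theta)\cap V(\theta)=\{0\}$, and comparing dimensions ($n+n=2n=\dim T_\theta TM$) yields $T_\theta TM=H(\theta)\oplus V(\theta)$.

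Finally, $j_\theta$ is linear, and if $j_\theta(\xi)=0$ then $(\pi_*)_\theta(\xi)=0$ and $K_\theta(\xi)=0$, i.e., $\xi\in V(\theta)\cap H(\theta)=\{0\}$; since $\dim T_\theta TM=2n=\dim(T_xM\times T_xM)$, the injective linear map $j_\theta$ is an isomorphism. The only step requiring genuine care is the well-definedness and linearity of $K_\theta$; everything else is a matter of picking the right adapted curve and counting dimensions.
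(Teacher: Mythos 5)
Your proof is correct, and it is the standard argument: the paper itself does not prove Proposition \ref{jac} but simply recalls it from \cite{P}, where exactly this verification (well-definedness and linearity of $K_\theta$, the two identities via suitable adapted curves, and the splitting $T_\theta TM = H(\theta)\oplus V(\theta)$ by the dimension count $\dim V(\theta)=\dim H(\theta)=n$ with trivial intersection) is carried out. Your handling of the only delicate point, that $K_\theta$ is well defined independently of the adapted curve, matches the cited treatment, so there is nothing to add.
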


\par  For each $\theta \in TM$, there is a unique geodesic $\gamma_{\theta}$ in $M$ with initial condition $\theta$.
Let $\xi \in T_{\theta} TM$ and $z: (-\epsilon, \epsilon)\rightarrow TM$ be an adapted curve to $\xi$, that is, with initial conditions as follows:
$$z(0)=\theta$$
$$ {z}'(0)=\xi.$$
Then the map
$(s, t) \mapsto \pi \circ \phi_t (z(s))$ gives rise to a variation of $\gamma_{\theta}$. Here $\pi: TM \rightarrow M$ is the projection map
 and $\phi_t$ is the geodesic flow of $TM$. The curves $t \mapsto \pi \circ \phi_t (z(s))$ are geodesics
and therefore the corresponding variational vector fields $J_{\xi}:=\frac{\partial}{\partial s}|_{s=0} \pi \circ \phi_t (z(s))$ is a Jacobi field with initial conditions
$$J_{\xi}(0)=(\pi_*)_{\theta} (\xi)$$
$${J}'_{\xi}(0)=K_{\theta}({\xi}).$$

%Let us denote $J(\gamma_{\theta})$ the vector space of all Jacobi fields along $\gamma_{\theta}$. Then the map $i_{\theta}: T_{\theta} TM \rightarrow J(\gamma_{\theta})$ given by
%$$i_{\theta} (\xi)= J_{\xi}$$
%is a linear isomorphism. For this reason, we call $J_{\xi}$ the Jacobi field determined by $\xi$.

 \section{ Adapted complex structure on the tangent bundle}
 In this section we describe the adapted complex structure on the tangent bundle.
 Let $(M,g)$ be a compact smooth Riemannian manifold, then $TM\setminus M$ carries a natural foliation by Riemannian surfaces defined as follows:
 For $\tau \in \mathbb{R}$ denote by $N_{\tau}: TM \rightarrow TM $ the smooth mapping defined by multiplication by $\tau$ in the fibers. If $\gamma: \mathbb{R} \rightarrow M$
 is a geodesic, define an immersion $\phi_{\gamma}: \mathbb{C} \rightarrow TM$ by
 $$\phi_{\gamma}(\sigma+ i \tau)=N_{\tau}{\gamma}'(\sigma).$$
 If for two geodesics $\gamma,\delta$, $\phi_{\gamma}(\mathbb{C}\setminus \mathbb{R})$ and $\phi_{\delta}(\mathbb{C}\setminus \mathbb{R})$
 intersect each other, then $\gamma$ and $\delta$ are the same geodesic traversed with different velocities, hence $\phi_{\gamma}(\mathbb{C})=\phi_{\delta}(\mathbb{C}).$
 Therefore the images of $\mathbb{C}\setminus \mathbb{R}$ under the mapping $\phi_{\gamma}$ defines a smooth foliation of $TM \setminus M$ by surfaces. Moreover, each leaf has complex structure that it inherits from $\mathbb{C}$ via $\phi_{\gamma}$. The leaves, along with their complex structure extend across $M$, but of course, on $M$ the foliation
 $\mathcal{F}$
  becomes singular.
 \par

 Given $R>0$, put
 $$T^R M=\{{v\in TM| g(v,v)<R^2}\}.$$
 A smooth complex structure on $T^R M$ will be called adapted if the leaves of the foliation $\mathcal{F}$ with the complex structure inherited from $\mathbb{C}$
 are complex submanifolds of $T^R M$. In \cite{GS,Ls,Sz}, they proved the following

\begin{thm} Let $M$ be a compact real analytic manifold equipped with a real analytic metric $g$.
  Then there exists some $R>0$ such that $T^R M$ carries a unique adapted complex
 structure.
 \end{thm}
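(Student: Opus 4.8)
The plan is to establish the existence and uniqueness of the adapted complex structure separately, since they are somewhat different in character. For the existence part, I would first work on $TM \setminus M$, where the foliation $\mathcal{F}$ by Riemann surfaces is nonsingular. On each leaf $\phi_\gamma(\mathbb{C} \setminus \mathbb{R})$ we already have a complex structure pulled back from $\mathbb{C}$, so a candidate almost complex structure $\mathcal{J}$ is determined along the leaf directions; the task is to extend it transversally and then show integrability near the zero section. The natural way to do this is to exploit the real analyticity: the metric $g$ is real analytic, so the exponential map and hence the Jacobi field data along geodesics depend real-analytically on their parameters. This lets one complexify. Concretely, for $x \in M$ fixed, the map $(\sigma + i\tau, \theta) \mapsto$ (suitable holomorphic continuation of $\sigma \mapsto \exp_x(\sigma\theta)$) is defined for $|\tau|$ small by the Cauchy–Kovalevskaya / real-analytic continuation theorem, and patching these complexified exponential maps over $x \in M$ produces a complex structure on a neighborhood $T^R M$ of the zero section that restricts to the leaf complex structures. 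I would verify that the resulting $\mathcal{J}$ is smooth (indeed real analytic) up to and across $M$, using that the singularity of $\mathcal{F}$ along $M$ is the standard one modeled on $\mathbb{C}^n \to \mathbb{R}^n$.

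For the uniqueness part, suppose $\mathcal{J}_1$ and $\mathcal{J}_2$ are two adapted complex structures on (possibly shrunk) $T^R M$. Both make every leaf of $\mathcal{F}$ a complex submanifold, so $\mathcal{J}_1$ and $\mathcal{J}_2$ agree on the tangent spaces to the leaves — this pins down $\mathcal{J}_i$ on a rank-two distribution at each point of $TM \setminus M$. The key observation is that the leafwise complex structure, together with the requirement of integrability, rigidly determines the full structure: the foliation $\mathcal{F}$ together with the ``polarization'' coming from the fibers of $\pi$ (the vertical distribution $V$) generates, under Lie bracket and application of $\mathcal{J}_i$, the entire tangent bundle $TTM$, and this process is forced. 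Equivalently, one can argue that on $TM \setminus M$ each adapted structure is the pushforward of the standard structure on a domain in $\mathbb{C}$ times a transverse parameter, and two integrable structures agreeing on the leaves and compatible with the same transverse real-analytic foliation data must coincide; a monodromy/analytic-continuation argument along geodesics, starting from agreement near $M$ where the local model is rigid, closes the gap.

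The main obstacle I expect is the behavior across the zero section $M$, where the foliation $\mathcal{F}$ degenerates. Away from $M$ everything is a routine consequence of real-analyticity and the leafwise prescription, but at points of $M$ one must show (i) that the candidate $\mathcal{J}$ extends smoothly, and (ii) that this extension is unique. Both reduce to understanding the precise nature of the singularity: near a point of $M$, after choosing normal coordinates, $TM$ looks like $\mathbb{R}^n \times \mathbb{R}^n$ and the leaves of $\mathcal{F}$ are (to leading order) the complex lines through the origin in $\mathbb{C}^n$, so the adapted structure is forced to be the standard one to leading order; the real-analytic corrections are then governed by the curvature tensor via the Jacobi equation, and one must check these corrections are consistent. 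This is where the compactness of $M$ and the uniformity of the real-analytic radius of convergence (giving a single $R > 0$ that works everywhere) enter. I would organize the write-up so that the hard analytic input — the real-analytic extension of the complexified exponential map with a uniform lower bound on its domain — is isolated as a preliminary lemma, after which both existence and uniqueness follow by relatively soft arguments. Since the full details appear in \cite{GS, Ls, Sz}, I would present this as a sketch and refer the reader there for the complete proof, which is essentially what the statement in the excerpt does.
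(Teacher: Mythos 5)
The paper does not actually prove this theorem: it is quoted from the literature and the proof is delegated entirely to \cite{GS, Ls, Sz}, so there is no internal argument to compare yours against. Your decision to present a sketch and refer to those references therefore matches what the paper does, and the broad strategy you outline (complexify along geodesics using real analyticity, treat the degenerate behavior at the zero section separately, use compactness to get a uniform $R$) is consistent with the actual proofs, particularly the Lempert--Sz\H{o}ke/Sz\H{o}ke construction.

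If, however, your sketch were meant to stand as a proof, two steps are genuinely underpowered. For uniqueness, you correctly note that adaptedness pins down $\mathcal{J}$ only on the rank-two distribution tangent to the leaves, but the claim that integrability plus bracket-generation by the vertical distribution ``forces'' the rest is not an argument; the references instead determine $\mathcal{J}$ on all of $T_\theta TM$ explicitly from metric data, by extending the frame $\bar\xi_j,\bar\eta_j$ to parallel fields along a leaf, showing adaptedness forces the coefficient functions $\phi_{jk}$ (ratios of Jacobi fields) to extend meromorphically to functions $f_{jk}$, and then writing $J\bar\xi_h$ in terms of $\operatorname{Re}f_{jk}(i)$ and $(\operatorname{Im}f_{jk}(i))^{-1}$ --- exactly the formula recalled in Section 3 of this paper. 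That formula is what makes uniqueness immediate, and it is absent from your outline. For existence, ``patching complexified exponential maps'' glosses over the hardest point: producing an integrable structure that is smooth across $M$, where the foliation is singular. Guillemin--Stenzel obtain it from a Bruhat--Whitney complexification of $M$ together with the solution of the homogeneous complex Monge--Amp\`ere equation, and Lempert--Sz\H{o}ke from a careful construction of holomorphic coordinates out of the Jacobi-field data; calling this ``a routine consequence of real-analyticity'' is where a standalone write-up would stall. Since you explicitly defer to \cite{GS, Ls, Sz} for the details, this is acceptable as a sketch, but the two points above are the ones a referee would ask you to either fill in or attribute precisely.
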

 When the adapted complex
 structure is defined on the whole tangent bundle, i.e. $R=\infty$, then $M$ is said to have entire  Grauert  tube.
  It was shown in \cite{Ls} that a Riemannian manifold with  entire  Grauert  tube has nonnegative sectional curvature.
 \par The adapted complex structure on $T^R M$ can be described as follows. For this purpose
 let $\theta \in T^R M \setminus M$ and $x=\pi(\theta)$, where $\pi: TM \rightarrow M$ is the projection map. Let
 $\gamma$ be a geodesic determined by $\theta$.  Choose tangent vectors $v_1, v_2, \cdots, v_{n-1}$ such that
 $v_1, v_2,  \cdots, v_{n-1}, v_n:=\frac{{\gamma}'(0)}{|{\gamma}'(0)|}$ form an  orthonormal basis of $T_x M$.

 \par Denote $L_{\theta}$ the leaf of the foliation $\mathcal{F}$ passing through $\theta$. A vector $\bar{\xi} \in T_{\theta} TM$ determines a vector field
 $\xi$ (we call it parallel vector field)
 along $L_{\theta}$ by defining it to be invariant under two semi-group actions. Namely $\xi$ is invariant under $N_{\tau}$ and the geodesic flow.
 For this parallel field $\xi$, we get that ${\xi}|_{\mathbb{R}}$ is a Jacobi field along $\gamma$.

 \par Now choose a set of vectors $\bar{\xi}_1, \bar{\xi}_2, \cdots, \bar{\xi}_{n}, \bar{\eta}_1, \bar{\eta}_2, \cdots, \bar{\eta}_{n} \in T_{\theta} TM$ satisfying
 $$ (\pi_*)_{\theta} (\bar {\xi}_j)=v_j, \ K_{\theta} (\bar {\xi}_j)=0$$
 $$ (\pi_*)_{\theta} (\bar {\eta}_j)=0, \ K_{\theta} (\bar {\eta}_j)=v_j.$$
 Here $K: TTM \rightarrow TM$ is the connection map described in section $2$. Extend $\bar {\xi}_j$ and $\bar {\eta}_j$ to get parallel vector fields
 $\xi_1, \xi_2, \cdots, \xi_n, \eta_1, \eta_2, \cdots, \eta_n$ along $L_{\theta}$.
Then the Jacobi fields $\xi_1|_{\mathbb{R}}, \xi_2|_{\mathbb{R}}, \cdots, \xi_n|_{\mathbb{R}}$ are  linearly independent except on a discrete subset $S_1$ of $\mathbb{R}$.
 Hence there are smooth real valued functions $\phi_{jk}$ defined on $\mathbb{R} \setminus S_1$ such that
 $$\eta_k|_{\mathbb{R}}=\sum_{j=1}^n \phi_{jk} \xi_j|_{\mathbb{R}}.$$
 From the presence of the adapted complex structure it follows that the functions $\phi_{jk}$ have meromorphic extension $f_{jk}$ over the domain
 $$D=\{{\sigma + i \tau \in \mathbb{C}| \ |\tau|< \frac{R}{\sqrt{g(\theta, \theta)}}}\}$$
 such that for each $j, k$, the poles of $f_{jk}$ lies on $\mathbb{R}$ and the matrix $Im (f_{jk})|_{D\setminus \mathbb{R}}$ is invertible.
 Let $(e_{jk})=(Im f_{jk}(i))^{-1}$. Then  the complex structure $J$ satisfies
 $$J \bar{\xi_h}=\sum_{k=1}^{n} e_{kh}\times
  [ \bar{\eta_k}-\sum_{j=1}^{n} Re  f_{jk}(i) \bar{\xi_j}].$$

 \begin{Remark} \label{rem}

  Because $\xi_1|{\mathbb{R}}, \xi_2|{\mathbb{R}}, \cdots, \xi_{n-1}|{\mathbb{R}}, \eta_1|{\mathbb{R}}, \eta_2|{\mathbb{R}}, \cdots,
  \eta_{n-1}|{\mathbb{R}}$ are normal Jacobi fields, while $\xi_n|{\mathbb{R}}, \eta_n|{\mathbb{R}}$ are tangential Jacobi fields, for $ \ 1 \leq j, k \leq n-1$,
  we have
  $$\phi_{nk}=\phi_{jn}\equiv 0$$
  $$f_{nk}=f_{jn}\equiv 0$$
  $$e_{nk}=e_{jn}\equiv 0$$
  %$$\phi_{nn}(\sigma)=\sigma$$
  %$$f_{nn}(\sigma+ i \tau)=\sigma+ i \tau$$
  %$$e_{nn}(\sigma+ i \tau)=\frac{1}{\sigma+ i \tau}.$$
\end{Remark}

 Consider the $n$-tuples
 $$\Xi=(\xi_1, \xi_2, \cdots, \xi_n), \ H=(\eta_1, \eta_2, \cdots, \eta_n)$$
 and holomorphic $n$-tuples
 $$\Xi^{1,0}=(\xi_1^{1,0}, \xi_2^{1,0}, \cdots, \xi_n^{1,0}),  \ H^{1,0}=(\eta_1^{1,0}, \eta_2^{1,0}, \cdots, \eta_n^{1,0}),$$
 where $\xi_j^{1,0}=\frac{1}{2}(\xi_j- i J \xi_j)$ and $J$ is the adapted complex structure.

 \par Then we have
 $$H(\sigma)=\Xi(\sigma) f(\sigma)$$
 $$H^{1,0}(\sigma+ i \tau)=\Xi^{1,0} (\sigma+ i \tau) f(\sigma + i \tau)$$
  $$f(\sigma+ i \tau)=(f_{jk}(\sigma+ i \tau)), \ \sigma \in \mathbb{R}\setminus S_1, \ |\tau| < \frac{R}{\sqrt{g(\theta, \theta)}}.$$
\par The following facts are proved in \cite{Ls,Sz}.

\begin{prop} \label{linear}
(1) The vectors $\xi_1^{1,0}, \xi_2^{1,0}, \cdots \xi_n^{1,0}$ are linearly independent over $\mathbb{C}$ on $D \setminus \mathbb{R}$.
The same is true for the vectors $\eta_1^{1,0}, \eta_2^{1,0}, \cdots, \eta_n^{1,0}$.
\\
(2) The $2n$ vectors $\xi_j, \eta_k$ are linearly independent in points $\sigma+ i \tau \in D \setminus \mathbb{R}$.
\end{prop}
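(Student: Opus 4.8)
The plan is to reduce all three assertions to a single statement along the leaf $L_\theta$ and to establish that statement from the explicit form of $J$ together with the invertibility of $\operatorname{Im}f$. With $\xi_h^{1,0}=\tfrac12(\xi_h-iJ\xi_h)$, a relation $\sum_h c_h\xi_h^{1,0}=0$ ($c_h=a_h+ib_h$) is equivalent, on taking real and imaginary parts, to $\sum_h a_h\xi_h+\sum_h b_hJ\xi_h=0$; so $\xi_1^{1,0},\dots,\xi_n^{1,0}$ are $\C$-dependent at a point $p\in L_\theta$ exactly when $\xi_1(p),\dots,\xi_n(p)$ fail to form a $\C$-basis of $(T_pTM,J_p)$. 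Separating real and imaginary parts in $H^{1,0}=\Xi^{1,0}f$ gives, on $D\setminus\R$, the identity $J\Xi=H(\operatorname{Im}f)^{-1}-\Xi(\operatorname{Re}f)(\operatorname{Im}f)^{-1}$, hence $H=(J\Xi)(\operatorname{Im}f)+\Xi(\operatorname{Re}f)$; since $\operatorname{Im}f$ is invertible there, the real spans of $\{\xi_j,\eta_k\}$ and of $\{\xi_j,J\xi_j\}$ coincide, so assertion (2) is equivalent to the same $\C$-basis statement, and moreover if the $\xi_j^{1,0}$ are independent and $f$ is invertible at $p$ then $\eta_j^{1,0}=\sum_l f_{lj}\xi_l^{1,0}$ are independent, which gives the second half of (1). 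It therefore suffices to prove (a) $f$ is invertible on $D\setminus\R$, and (b) $\xi_1(p),\dots,\xi_n(p)$ form a $\C$-basis of $(T_pTM,J_p)$ for every $p=\phi_{\gamma}(z)$ with $z\in D\setminus\R$.

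For (a) I would use that $f$ is symmetric with $\operatorname{Im}f$ of constant sign on each half of $D$. Along $\gamma$ the quantities $\langle\xi_l,\xi_m'\rangle-\langle\xi_l',\xi_m\rangle$ and $\langle\xi_j,\eta_k'\rangle-\langle\xi_j',\eta_k\rangle$ are constant (by the symmetries of the curvature tensor); with the normalization built into the construction the first vanishes identically and the second equals $\delta_{jk}$, so inserting $\eta_k=\sum_m\phi_{mk}\xi_m$ yields $G\phi'=I$ with $G=(\langle\xi_l,\xi_m\rangle)$ symmetric, whence $\phi'$ is symmetric and, since $\phi(0)=0$, $\phi$ is symmetric on $\R\setminus S_1$, so $f$ is symmetric on $D$ by analytic continuation. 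Moreover $\operatorname{Im}f(\sigma+i\tau)=\tau\,G(\sigma)^{-1}+O(\tau^2)$ near $\R$ with $G(\sigma)^{-1}$ positive definite, so connectedness of each half of $D$ and the invertibility of $\operatorname{Im}f$ there force $\operatorname{Im}f$ to be definite off $\R$; together with the symmetry of $\operatorname{Re}f$ this makes $f=\operatorname{Re}f+i\operatorname{Im}f$ invertible on $D\setminus\R$. (That $f$ is symmetric and $\operatorname{Im}f$ definite is part of the structure established in \cite{Ls,Sz}.)

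Statement (b) is immediate at the base point $p=\theta$. By Proposition \ref{jac}, $T_\theta TM=H(\theta)\oplus V(\theta)$ with $\{\bar\xi_j\}$ a basis of $H(\theta)$ and $\{\bar\eta_j\}$ a basis of $V(\theta)$; from $J\bar\xi_h=\sum_k e_{kh}\big(\bar\eta_k-\sum_j(\operatorname{Re}f_{jk}(i))\bar\xi_j\big)$ the $V(\theta)$-component of $J\bar\xi_h$ is $\sum_k e_{kh}\bar\eta_k$, and $(e_{kh})=(\operatorname{Im}f(i))^{-1}$ is invertible, so a relation $\sum_h a_h\bar\xi_h+\sum_h b_hJ\bar\xi_h=0$ forces $\sum_h e_{kh}b_h=0$ for all $k$, hence $b=0$ and then $a=0$. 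Granting (b) on all of $\phi_{\gamma}(D\setminus\R)$, assertion (2) follows: there $\{\xi_j^{1,0}\}\cup\{\overline{\xi_j^{1,0}}\}$ is a basis of $T_pTM\otimes\C$, and in this basis the $2n$ real vectors $\xi_1,\dots,\xi_n,\eta_1,\dots,\eta_n$ — written via $\xi_j=\xi_j^{1,0}+\overline{\xi_j^{1,0}}$ and $\eta_k^{1,0}=\sum_l f_{lk}\xi_l^{1,0}$ — have coordinate matrix $\left(\begin{smallmatrix}I&f\\I&\bar f\end{smallmatrix}\right)$, with determinant $\det(\bar f-f)=(-2i)^n\det(\operatorname{Im}f)\neq0$, so they are linearly independent.

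The main obstacle is propagating (b) from $p=\theta$ to every $p\in\phi_{\gamma}(D\setminus\R)$. Since the $\xi_j^{1,0}$ extend to holomorphic sections along the leaf, $\xi_1^{1,0}\wedge\cdots\wedge\xi_n^{1,0}$ is a holomorphic section of a line bundle over $D$, nonzero at $\theta$, so the locus where (b) fails is discrete — and the content is that this locus lies in $\R$. This is precisely where the global existence of the adapted complex structure on $T^RM$ enters: describing the parallel fields through the fundamental solution of the complexified Jacobi equation along $\gamma$, the failure of (b) at $z$ amounts to the degeneracy of that fundamental solution at $z$, and the structure extending over all of $T^RM$ confines such degeneracy to $\R$; together with the anti-holomorphic involution $z\mapsto\bar z$ of $L_\theta$ induced by $v\mapsto-v$ (which reduces matters to $\operatorname{Im}z>0$), this yields (b) on $D\setminus\R$. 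For this last step I would invoke \cite{Ls,Sz}.
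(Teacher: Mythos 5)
The paper offers no internal proof of Proposition \ref{linear}: it is quoted verbatim from \cite{Ls,Sz}, so there is no argument of the author's to compare yours against. Measured as a standalone proof, your reductions are correct and worth having: the observation that a $\C$-relation among the $\xi_h^{1,0}$ is the same as a real relation among $\xi_1,\dots,\xi_n,J\xi_1,\dots,J\xi_n$; the identity $H=(J\Xi)\,\mathrm{Im}f+\Xi\,\mathrm{Re}f$ with transition matrix $\left(\begin{smallmatrix} I & \mathrm{Re}f\\ 0 & \mathrm{Im}f\end{smallmatrix}\right)$ invertible off $\R$, which makes (2) equivalent to the first half of (1); the deduction of the second half of (1) from $H^{1,0}=\Xi^{1,0}f$ once $f$ is invertible; and the standard fact that a complex symmetric matrix with definite imaginary part is invertible (your Wronskian derivation of $G\phi'=I$ and the symmetry/definiteness of $f$ duplicates Theorem \ref{nice}, which the paper also imports from \cite{Ls,Sz}, so that part is fine either way). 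The gap is in your last paragraph: the single statement to which you have reduced everything --- that the degeneracy locus of $\xi_1^{1,0}\wedge\cdots\wedge\xi_n^{1,0}$ is contained in $\R$ --- is precisely the substance of the proposition, and you do not prove it. ``The structure extending over all of $T^RM$ confines such degeneracy to $\R$'' is a restatement of the claim, not an argument, and ``I would invoke \cite{Ls,Sz}'' defers exactly the point at issue. So as a blind proof the core is missing (though, to be fair, you are then no less complete than the paper, which cites the same sources for the whole proposition).

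The irony is that your own equivalences point to a short self-contained argument if you attack (2) directly instead of (1). A point of the leaf off the real axis is $\phi_\gamma(\sigma+i\tau)=N_\tau\gamma'(\sigma)=N_\tau\circ\phi_\sigma(\theta)$ with $\tau\neq 0$, and both $N_\tau$ ($\tau\neq 0$) and the geodesic flow $\phi_\sigma$ are diffeomorphisms of $TM$; by the very definition of the parallel fields (invariance under these two actions, as recalled in Section 3), the vectors $\xi_j,\eta_k$ at such a point are the images of $\bar\xi_j,\bar\eta_k$ under the single linear isomorphism $(N_\tau)_*\circ(\phi_\sigma)_*$, and $\bar\xi_j,\bar\eta_k$ form a basis of $T_\theta TM$ by Proposition \ref{jac}. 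That gives (2) at every point of $D\setminus\R$ with no complex analysis at all; your transition-matrix identity (using only the invertibility of $\mathrm{Im}f$ off $\R$, which is part of the quoted description of the adapted structure) then yields the first half of (1), and invertibility of $f$ yields the second half. Reorganized this way, your write-up would actually prove the proposition rather than route its essential content back through \cite{Ls,Sz}.
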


\begin{thm} \label {nice}
The matrix valued meromorphic functions $f(\sigma + i \tau )$ is symmetric (as a matrix) and satisfies

$$ f(0)=0, f'(0)=Id.$$
Moreover, if $\sigma + i \tau \in D, \tau >0$, then $Im f(\sigma + i \tau)$ is a symmetric, positive definite matrix.
\end{thm}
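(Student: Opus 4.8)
The plan is to extract all three assertions — symmetry of $f$, the initial conditions $f(0)=0$ and $f'(0)=\mathrm{Id}$, and positive definiteness of $\mathrm{Im}\,f$ on the upper half of $D$ — from the interplay between the Jacobi‑field description of $\Xi$ and $H$ and the compatibility of the parallel frames with the adapted complex structure $J$. The symmetry statement I would deduce from the Wronskian‑type identity for Jacobi fields: since $\xi_j|_{\mathbb R}$ and $\eta_k|_{\mathbb R}$ are Jacobi fields along $\gamma$ with $\xi_j(0)=v_j$, $\xi_j'(0)=0$, $\eta_k(0)=0$, $\eta_k'(0)=v_k$, the quantity $g(\xi_j',\eta_k)-g(\xi_j,\eta_k')$ is constant in $\sigma$; evaluating at $\sigma=0$ gives $-\delta_{jk}$, so in particular the matrix $g(\Xi',H)-g(\Xi,H')$ is $-\mathrm{Id}$, and a second such pairing among the $\eta$'s (using $g(\eta_j',\eta_k)-g(\eta_j,\eta_k')\equiv 0$ by the initial conditions) combined with $H=\Xi f$ forces $f=f^{T}$ on $\mathbb R\setminus S_1$, hence on all of $D$ by analytic continuation. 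The initial conditions are even more direct: near $\sigma=0$ the Jacobi fields behave like $\xi_j(\sigma)\sim v_j$ and $\eta_k(\sigma)\sim \sigma v_k$, so the relation $\eta_k|_{\mathbb R}=\sum_j\phi_{jk}\xi_j|_{\mathbb R}$ gives $\phi_{jk}(\sigma)=\sigma\,\delta_{jk}+O(\sigma^2)$, i.e. $f(0)=0$ and $f'(0)=\mathrm{Id}$; one should double‑check using Remark \ref{rem} that the tangential directions $\xi_n,\eta_n$ contribute the $1{\times}1$ block $\phi_{nn}(\sigma)=\sigma$ exactly, which is consistent.

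The substantive part is the positivity of $\mathrm{Im}\,f(\sigma+i\tau)$ for $\tau>0$, and this is where I expect the main obstacle to lie, because it is the only place where the \emph{existence and adaptedness} of the complex structure $J$ — rather than just real Jacobi‑field algebra — is genuinely used. The strategy is to compare the two frames $\Xi^{1,0}$, $H^{1,0}$ of the holomorphic tangent bundle of $L_\theta$ via $H^{1,0}=\Xi^{1,0}f$, and to compute a Hermitian pairing of $\Xi^{1,0}$ and $H^{1,0}$ against one another using the (complexified) metric, exploiting that the leaves are complex submanifolds. Concretely, one expects an identity of the shape $\frac{1}{2i}\big(g_{\mathbb C}(\overline{\Xi^{1,0}},H^{1,0})-g_{\mathbb C}(\overline{H^{1,0}},\Xi^{1,0})\big)=c\,\mathrm{Im}\,f$ for a fixed positive constant $c$, where the left side is, up to normalization, the imaginary part of a Gram matrix of the linearly independent vectors $\xi_j^{1,0}$ (linear independence on $D\setminus\mathbb R$ is Proposition \ref{linear}(1)); its definiteness then comes from the signature of the induced metric on the Riemann‑surface leaf. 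I would pin down the sign by evaluating at a single convenient point — e.g. letting $\tau\to 0^+$ and using $f'(0)=\mathrm{Id}$, or working on a flat model geodesic in $\mathbb R^n$ where the Jacobi fields and $J$ can be written down explicitly — and then invoke continuity and the nonvanishing of $\det\mathrm{Im}\,f$ (already guaranteed, since $\mathrm{Im}(f_{jk})$ is invertible on $D\setminus\mathbb R$) to conclude that $\mathrm{Im}\,f$ cannot change signature on the connected set $\{\tau>0\}$, hence stays positive definite throughout.

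The one bookkeeping point to be careful about is the splitting into the normal $(n{-}1)\times(n{-}1)$ block and the tangential $1\times 1$ block dictated by Remark \ref{rem}: symmetry, the initial conditions, and positivity must each be checked blockwise, but in the tangential direction $f_{nn}(\sigma)=\sigma$ is linear with positive imaginary part $\tau>0$, so that block is trivial, and all the real content is the normal block, for which the Wronskian and Gram‑matrix computations above apply verbatim. Assembling these pieces — symmetry from the Lagrangian/Wronskian identity, the $2$‑jet at $0$ from the standard Jacobi expansion, and positivity from the Hermitian Gram matrix of the holomorphic frame together with a connectedness argument — yields the theorem.
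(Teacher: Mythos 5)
The paper never proves Theorem \ref{nice}: it is quoted verbatim from \cite{Ls,Sz} ("The following facts are proved in \cite{Ls,Sz}"), so there is no internal proof to compare with, and your proposal has to be judged on its own; judged so, it is essentially a correct reconstruction of the standard argument, with one speculative detour you should drop. The symmetry and $2$-jet parts work as you describe: the $\xi$--$\eta$ Wronskian pairing gives $(\Xi^*\Xi)f'=\mathrm{Id}$ on $\mathbb{R}\setminus S_1$ (exactly the identity from Proposition 6.11 of \cite{Ls} that the paper invokes in Section 4), the $\eta$--$\eta$ pairing gives $(f')^{T}(\Xi^*\Xi)f=f^{T}(\Xi^*\Xi)f'$, and the two together force $f=f^{T}$, extended to $D$ by analytic continuation --- just note that both computations also use the vanishing of the $\xi$--$\xi$ Wronskians, which you did not state; and $\xi_j(\sigma)=P_\sigma v_j+O(\sigma^2)$, $\eta_k(\sigma)=\sigma P_\sigma v_k+O(\sigma^3)$ (with $0\notin S_1$ since the $\xi_j(0)=v_j$ are independent) indeed yields $f(0)=0$, $f'(0)=\mathrm{Id}$. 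For the positivity of $\mathrm{Im}\,f$, the proposed Hermitian Gram-matrix identity with a universal constant $c$ is unsubstantiated and unnecessary; your fallback is the argument that actually works, and it is precisely the trick this paper itself uses for $-f^{-1}$ in Lemma \ref{key9}: on the connected set $\{\zeta\in D:\ \mathrm{Im}\,\zeta>0\}$ (where $f$ has no poles) the real symmetric matrix $\mathrm{Im}\,f(\zeta)$ is continuous and nondegenerate, and $f(i\tau)=i\tau\,\mathrm{Id}+O(\tau^2)$ makes it positive definite for small $\tau>0$, so its signature is constant and positivity propagates. The only point to make honest is where the nondegeneracy of $\mathrm{Im}\,f$ on $D\setminus\mathbb{R}$ comes from: in this paper it is listed in Section 3 as part of the background description of the adapted complex structure, so citing it is legitimate here, but in a self-contained proof it is itself a consequence of Proposition \ref{linear}(2) by the same take-real-parts argument as in Lemma \ref{nice0} (if $\mathrm{Im}\,f(\zeta)v=0$ with $0\neq v\in\mathbb{R}^n$, then $w=f(\zeta)v\in\mathbb{R}^n$ and $\sum_k v_k\eta_k=\sum_j w_j\xi_j$ at $\zeta$, contradicting independence of the $\xi_j,\eta_k$). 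Your blockwise treatment of the tangential direction ($f_{nn}(\zeta)=\zeta$, $f_{jn}=f_{nj}\equiv 0$, consistent with Remark \ref{rem}) is correct.
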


\section{Growth rate of counting functions}
In this section we prove Theorem \ref{main}.
\par
 Let $M$ be a $n$-dimensional compact manifold endowed with a Riemannian metric $g$. For $x \in M$ and each $T >0$, let
 $$D_T :=\{{ v \in T_x M|g(v,v)\leq T^2}\}$$
 be the disk of radius $T$ in $T_x M$. Define the counting function $n_T (x, y)$ by
 $$n_T (x, y):=\sharp ((exp_x)^{-1}(y)\cap D_T ).$$
 In other words, $n_T (x,y)$ counts the number of geodesic arcs joining $x$ to $y$ with length $\leq T$.

\par
 The following Theorems proved in \cite{Be, Gr,P} will be crucial for us.
\begin{thm} \label{gromov0}
\begin{equation} \label{ine2}
\int_M n_T (x, y) dy =\int_0^T  d \sigma \int_{\mathbb{S}} \sqrt{det (g(J_j (\sigma), J_k (\sigma)))_{j, k=1,2, \cdots, n-1}} \ d\theta,
\end{equation}
where $\mathbb{S}$ is the unit sphere of $T_x M$.  Moreover,
$J_j, j=1, 2, \cdots, n-1$ are Jacobi fields along the unique geodesic $\gamma$ determined by $\theta \in \mathbb{S}$ (i.e. $\gamma(0)=x, {\gamma}'(0)=\theta$) with initial conditions
$$J_j(0)=0$$
$$ {J}'_j (0)=v_j,$$
where $v_j, j=1, 2 \cdots n-1$ is an orthonormal basis of $T_{\theta} \mathbb{S}$.

\end{thm}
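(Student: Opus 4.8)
The plan is to read the left-hand side as the integral over $M$ of the multiplicity function of the exponential map, apply the area formula, and then evaluate the resulting Jacobian by Jacobi fields and the Gauss lemma.

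First I would fix $x\in M$ and set $E=\exp_x\colon T_xM\to M$, a smooth map between manifolds of the same dimension, globally defined since $M$ is compact. By definition $n_T(x,y)=\sharp\big(E^{-1}(y)\cap D_T\big)$, so $\int_M n_T(x,y)\,dy$ is the integral over $M$ of the multiplicity of $E|_{D_T}$. The area formula (change of variables with multiplicities) then gives
$$\int_M n_T(x,y)\,dy=\int_{D_T}\abs{\operatorname{Jac}E(v)}\,dv,$$
where $\abs{\operatorname{Jac}E(v)}$ is the modulus of the determinant of $dE_v\colon T_v(T_xM)\to T_{E(v)}M$, the source carrying the flat metric of $T_xM$ and the target carrying $g$. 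At this stage I would verify the hypotheses: by Sard's theorem the critical values of $E$ form a null set on which $\operatorname{Jac}E$ vanishes anyway, and for every regular value $y$ the fibre $E^{-1}(y)\cap D_T$ is finite by compactness of $D_T$, which also takes care of measurability and integrability of $y\mapsto n_T(x,y)$.

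Next I would compute $\abs{\operatorname{Jac}E(v)}$ in polar coordinates $v=\sigma\theta$, $\sigma\in(0,T]$, $\theta\in\mathbb{S}$, where $dv=\sigma^{n-1}\,d\sigma\,d\theta$ with $d\theta$ the standard measure on $\mathbb{S}$. I would invoke the classical identity $dE_{\sigma\theta}(u)=\frac1\sigma J_u(\sigma)$, where $J_u$ is the Jacobi field along the unit-speed geodesic $\gamma=\gamma_\theta$ with $J_u(0)=0$, $J_u'(0)=u$; this follows by differentiating the geodesic variation $(s,t)\mapsto\exp_x\big(t(\theta+su)\big)$. Applied to the orthonormal basis $\theta,v_1,\dots,v_{n-1}$ of $T_xM$ (with $v_1,\dots,v_{n-1}$ spanning $T_\theta\mathbb{S}$), this yields $dE_{\sigma\theta}(\theta)=\gamma'(\sigma)$, a unit vector since $J_\theta(t)=t\gamma'(t)$, and $dE_{\sigma\theta}(v_j)=\frac1\sigma J_j(\sigma)$ with $J_j$ exactly the Jacobi fields in the statement. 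Since $\langle J_j,\gamma'\rangle$ is affine in $\sigma$ and vanishes together with its first derivative at $\sigma=0$ (because $\langle J_j(0),\gamma'(0)\rangle=0$ and $\langle J_j'(0),\gamma'(0)\rangle=\langle v_j,\theta\rangle=0$), each $J_j$ is a normal Jacobi field, so the Gram matrix of $dE_{\sigma\theta}(\theta),dE_{\sigma\theta}(v_1),\dots,dE_{\sigma\theta}(v_{n-1})$ is block diagonal with a $1$ in the radial entry, whence
$$\abs{\operatorname{Jac}E(\sigma\theta)}=\sigma^{-(n-1)}\sqrt{\det\big(g(J_j(\sigma),J_k(\sigma))\big)_{j,k=1,\dots,n-1}}.$$

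Finally I would substitute this into the area formula: the factor $\sigma^{-(n-1)}$ cancels the $\sigma^{n-1}$ coming from $dv$, leaving precisely
$$\int_M n_T(x,y)\,dy=\int_0^T d\sigma\int_{\mathbb{S}}\sqrt{\det\big(g(J_j(\sigma),J_k(\sigma))\big)_{j,k=1,\dots,n-1}}\;d\theta.$$
I expect the main obstacle to be the careful justification of the area formula for the (highly) non-injective smooth map $E$, i.e.\ controlling the critical set and the null set where $n_T(x,\cdot)=\infty$ and establishing the requisite measurability; the geometric input — the Jacobi-field expression for $dE$ and the Gauss-lemma orthogonality — is standard Riemannian geometry. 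Since this is stated as a known result of Berger and Bott, one could alternatively simply cite \cite{Be,Gr,P}.
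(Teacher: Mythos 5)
Your argument is correct: the area formula applied to $\exp_x$ on $D_T$, the identity $d(\exp_x)_{\sigma\theta}(u)=\tfrac{1}{\sigma}J_u(\sigma)$, and the Gauss-lemma block-diagonalization of the Gram matrix give exactly the stated identity, and the Sard-theorem/compactness remarks dispose of the measurability and finiteness issues. The paper itself offers no proof of this theorem but simply cites \cite{Be,Gr,P}, and your argument is precisely the standard one given in those references (Berger--Bott's lemma, cf.\ Paternain's book), so there is nothing to add beyond the citation.
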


\begin{thm} \label{gro}
Let $M$ be a $n$-dimensional compact simply connected manifold endowed with a Riemannian metric $g$, then
\begin{equation} \label{gromov}
\sum_{j=0}^{k-1} dim H_j (\Omega M, F) \leq \frac{1}{Vol_g (M)}\int_{M} n_{Ck} (x, y) dy
\end{equation}
where $C$ is a positive constant independent of $k$ and $F$ is any field of coefficients.
\end{thm}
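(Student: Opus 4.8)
The inequality is purely topological, and the plan is to prove it by Morse theory on a path space, comparing critical points of the energy functional with the geodesic count $n_T$. Fix a point $p\in M$. By Sard's theorem the set of $q\in M$ that are not conjugate to $p$ along any geodesic has full measure; call such a $q$ \emph{regular}. For a regular $q$, the space $\Omega_{pq}M$ of $H^1$ paths from $p$ to $q$ is a complete Hilbert manifold on which the energy $E(\gamma)=\int_0^1|\dot\gamma(t)|^2\,dt$ is a smooth function satisfying the Palais--Smale condition (C) and having only nondegenerate critical points; these critical points are exactly the geodesics from $p$ to $q$, a geodesic of length $\ell$ sitting at level $\ell^2$ with Morse index equal to its geometric index, i.e.\ the number of interior conjugate points counted with multiplicity. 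Since $M$ is connected, endpoint evaluation exhibits the path space over $M$ as a fibration with homotopy-equivalent fibres, so $\Omega_{pq}M\simeq\Omega M$; hence $\dim H_j(\Omega M,F)=\dim H_j(\Omega_{pq}M,F)$ for every $j$ and every field $F$, and these dimensions are finite because $M$ is simply connected.

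The second ingredient is the standard Morse count on Hilbert manifolds: for a regular value $c$ of $E$ the sublevel set $\Omega^{\le c}_{pq}:=\{\gamma\in\Omega_{pq}M:\ E(\gamma)\le c\}$ has the homotopy type of a CW complex with exactly one $j$-cell for each critical point of index $j$ at level at most $c$, so that
\[
\sum_{j\ge 0}\dim H_j\big(\Omega^{\le c}_{pq},F\big)\ \le\ \#\{\text{geodesics }p\to q\text{ of length }\ell\text{ with }\ell^2\le c\}\ =\ n_{\sqrt{c}}(p,q).
\]
To feed the homology of the whole loop space into the left-hand side up to degree $k$, one needs the following compression statement, which is the heart of the matter: there is a constant $C=C(M)>0$, independent of $k$ and of $q$, such that the inclusion $\Omega^{\le (Ck)^2}_{pq}\hookrightarrow\Omega_{pq}M$ is surjective on $H_j(\,\cdot\,,F)$ for all $j<k$; equivalently, every homology class of $\Omega M$ of degree $<k$ is carried by a cycle consisting of paths from $p$ to $q$ of length $<Ck$. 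I would establish this by a geometric ``pulling tight'' deformation in the spirit of Gromov's work on the homotopical effects of dilatation: represent a $j$-cycle by a map of a finite simplicial complex $K\times[0,1]\to M$; after a sufficiently fine subdivision and a geodesic straightening, arrange that along each path $\{x\}\times[0,1]$ the image is a broken geodesic whose number of segments is bounded linearly in $j\le k$, each segment of length at most $\mathrm{diam}(M)$, so that the straightened cycle lies in $\Omega^{\le (Ck)^2}_{pq}$ for a suitable $C=C(M)$.

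Granting the compression statement, for every regular $q$ (and, after a harmless enlargement of $C$ so that $(Ck)^2$ is a regular value of $E$) a surjection of finite-dimensional spaces forces $\dim H_j(\Omega M,F)\le\dim H_j\big(\Omega^{\le (Ck)^2}_{pq},F\big)$ for $j<k$, whence
\[
\sum_{j=0}^{k-1}\dim H_j(\Omega M,F)\ \le\ \sum_{j\ge 0}\dim H_j\big(\Omega^{\le (Ck)^2}_{pq},F\big)\ \le\ n_{Ck}(p,q).
\]
The left-hand side is independent of $q$ and the regular points form a full-measure subset of $M$, so integrating over $q\in M$ and dividing by $Vol_g(M)$ gives \eqref{gromov}; the averaged form is the one that combines with the Berger--Bott formula of Theorem \ref{gromov0} in the applications.

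I expect the compression statement to be the only genuine obstacle. The naive Morse bound applied directly to $\Omega_{pq}M$ is in general vacuous, since a compact manifold may carry infinitely many geodesics from $p$ to $q$ of uniformly bounded Morse index (for instance when $(M,g)$ has a flat cylindrical region), so the inequality really does require the quantitative control ensuring that homology classes of degree $<k$ are represented by paths of length $O(k)$. Turning the heuristic ``combinatorial complexity $\lesssim j$ after straightening'' into a rigorous estimate — bounding the number of geodesic segments per path in terms of the degree of the cycle and the geometry of $M$, and checking that the straightening deformation stays inside the relevant sublevel set — is the technical core; everything else is standard infinite-dimensional Morse theory together with a measure-theoretic averaging.
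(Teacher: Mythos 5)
The paper does not actually prove this theorem; it quotes it from Berger--Bott, Gromov and Paternain \cite{Be,Gr,P}, so the relevant comparison is with Gromov's argument as written up in \cite{Gr} and in Chapter 7 of \cite{P}. Your outer shell reproduces that argument correctly: Morse theory of the energy functional on $\Omega_{pq}M$ for $q$ not conjugate to $p$ along any geodesic, the homotopy equivalence $\Omega_{pq}M\simeq\Omega M$, the bound of the homology of an energy sublevel set by the number of geodesics from $p$ to $q$ of length at most $Ck$, and the averaging over $q$ against the full-measure set of regular endpoints. All of that is standard and unobjectionable.

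The genuine gap is exactly the step you yourself flag as the ``compression statement'', and the route you sketch for it would not work. Straightening a cycle after a ``sufficiently fine subdivision'' bounds the number of geodesic segments in each path by the fineness of the subdivision, which is dictated by the modulus of continuity (the dilatation) of the particular representative you started with, not by the degree $j$ of the class; nothing in the sketch yields a bound linear in $j$ with a constant depending only on $(M,g)$. Moreover, your sketch never uses simple connectivity, and the compression statement is false without it: on a flat torus $H_0(\Omega M,F)$ is already infinite dimensional and classes in degree $0$ require loops of arbitrarily large length, so no argument that ignores $\pi_1(M)=0$ can succeed. Gromov's actual proof uses simple connectivity to choose a cell decomposition of $M$ with one vertex and no $1$-cells, builds from it a combinatorial model of the loop space whose $j$-dimensional cells correspond to words in at most $j$ cells of $M$ (each cell of $M$ has dimension at least $2$, hence contributes at least $1$ to the loop-space dimension), and maps this model into $\Omega M$ cell by cell with uniformly bounded dilatation, so that the $j$-skeleton lands in loops of length at most $Cj$; the Morse-theoretic counting and the averaging over $q$ then proceed as you describe. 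As it stands, your proposal assumes precisely the statement that carries the real content of the theorem.
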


\begin{rem}
The assumption that $M$ is simply connected in Theorem \ref{gro} is essential.
\end{rem}

\par When $M$ has  entire Grauert tube, we will see that the right hand side in \ref{gromov0} can be further described by
a  matrix valued holomorphic function on the upper half plane. Applying Fatou's representation theorem to
this function, we will derive that $\int_M n_T (x, y) dy$ has  polynomial growth  and hence $M$ is topologically elliptic.

\par  Now we give the details of the proof. Let $\mathbb{S}$ be the unit sphere of $T_x M$ and
$\gamma$ the unique geodesic determined by
$\theta \in \mathbb{S}$, i.e. $\gamma(0)=x, {\gamma}'(0)=\theta$.
Let $v_1, v_2, \cdots, v_n:={\gamma}'(0)$ be an orthonormal basis of $T_x M$.

 \par As in section 3, choose a set of vectors $\bar{\xi}_1, \bar{\xi}_2, \cdots, \bar{\xi}_{n}, \bar{\eta}_1, \bar{\eta}_2, \cdots, \bar{\eta}_{n} \in T_{\theta} TM$ satisfying
 $$ \pi_* (\bar {\xi}_j)=v_j, \ K \bar {\xi}_j=0$$
 $$ \pi_* (\bar {\eta}_j)=0, \ K \bar {\eta}_j=v_j.$$
 Here $K: TTM \rightarrow TM$ is the connection map described in section $2$. Extend $\bar {\xi}_j$ and $\bar {\eta}_j$ to get parallel vector fields
 $\xi_1, \xi_2, \cdots, \xi_n, \eta_1, \eta_2, \cdots, \eta_n$.
 Then   $J_j:=\eta_j |{\mathbb{R}}, j=1, 2, \cdots, n-1$ are normal Jacobi fields along $\gamma$ with initial conditions
$$J_j(0)=0, \ {J}'_j (0)=v_j.$$
 Moreover, $\xi_1|_{\mathbb{R}}, \xi_2|_{\mathbb{R}}, \cdots, \xi_n|_{\mathbb{R}}$ are linearly independent except on a discrete subset $S_1$ of $\mathbb{R}$.
 Hence there are smooth real valued functions $\phi_{jk}$ defined on $\mathbb{R} \setminus S_1$ such that
 $$\eta_k|_{\mathbb{R}}=\sum_{j=1}^n \phi_{jk} \xi_j|_{\mathbb{R}}.$$
 As $M$ has  entire Grauert tube,  it follows that the functions $\phi_{jk}$ have meromorphic extension $f_{jk}$ over the whole complex plane
 such that for each $j, k$, the poles of $f_{jk}$ lies on $\mathbb{R}$ and the matrix $Im (f_{jk})|_{\mathbb{C} \setminus \mathbb{R}}$ is invertible.

\par Consider the $n$-tuples
 $$\Xi=(\xi_1, \xi_2, \cdots, \xi_n), \ H=(\eta_1, \eta_2, \cdots, \eta_n)$$
 and holomorphic $n$-tuples
 $$\Xi^{1,0}=(\xi_1^{1,0}, \xi_2^{1,0}, \cdots, \xi_n^{1,0}),  \ H^{1,0}=(\eta_1^{1,0}, \eta_2^{1,0}, \cdots, \eta_n^{1,0}),$$
 where $\xi_j^{1,0}=\frac{1}{2}(\xi_j- i J \xi_j)$ and $J$ is the adapted complex structure.

 \par Then we have
 $$H(\sigma)=\Xi(\sigma) f(\sigma)$$
 $$H^{1,0}(\sigma+ i \tau)=\Xi^{1,0} (\sigma+ i \tau) f(\sigma + i \tau)$$
  $$f(\sigma+ i \tau)=(f_{jk}(\sigma+ i \tau)), \ \sigma \in \mathbb{R}\setminus S_1.$$
\par
 For $\sigma \in \mathbb{R}\setminus S_1$, we can view $\Xi(\sigma), H(\sigma)$ as linear mappings $\mathbb{R}^n \rightarrow T_{\gamma(\sigma)} M$ given by
$$(\omega_j)=\omega \mapsto \Xi(\sigma) \omega=\sum_{j=1}^n \omega_j \xi_j(\sigma)$$
and similarly for $H(\sigma)$. Denote $\Xi^*(\sigma), H^*(\sigma)$ the adjoint of $\Xi(\sigma), H(\sigma)$, respectively (adjoint defined using the
Euclidean scalar product on $\mathbb{R}^n$ and the Riemannian metric on $T_{\gamma (\sigma)}M$). By the proof of Proposition 6.11 in \cite{Ls}, we get
$$\Xi^*(\sigma)\Xi(\sigma)f'(\sigma)=Id, \ \sigma \in (0, c)$$
for some positive constant $c$. Let $e_j$ be the standard orthonormal  basis of $\mathbb{R}^n$, then
$$\Xi(\sigma)e_j =\xi_j(\sigma)$$
$$\Xi^*(\sigma)\Xi(\sigma)e_j =g(\xi_j(\sigma), \xi_k(\sigma))e_k.$$
Hence $\Xi^*(\sigma)\Xi(\sigma)$ is real analytic over $\mathbb{R}$. By analytic continuation, we have
$$\Xi^*(\sigma)\Xi(\sigma)f'(\sigma)=Id$$
for every $\sigma \in \mathbb{R}\setminus S_1$.

\begin{lem} \label{nice0}
If $\sigma+ i \tau \in \mathbb{C} \setminus \mathbb{R}$, then $Im f^{-1}(\sigma + i \tau)$ is invertible.
\end{lem}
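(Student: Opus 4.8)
The plan is to deduce the lemma from the structural properties of $f$ collected in Theorem~\ref{nice}, together with one elementary piece of linear algebra: the matrix inverse of a complex symmetric matrix with definite imaginary part again has definite — in particular invertible — imaginary part.

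First I would verify that $f(z)$ is itself invertible for every $z=\sigma+i\tau\in\mathbb{C}\setminus\mathbb{R}$, so that $f(z)^{-1}$ makes sense. If $\tau>0$, Theorem~\ref{nice} asserts that $Im\,f(z)$ is a real symmetric positive definite matrix; since $Re\,f(z)$ is real and symmetric, $v^{*}(Re\,f(z))v\in\mathbb{R}$ for every $v\in\mathbb{C}^{n}$ (here $v^{*}$ denotes the conjugate transpose), so $f(z)v=0$ would force $0=Im\bigl(v^{*}f(z)v\bigr)=v^{*}(Im\,f(z))v$ and hence $v=0$. Because the functions $\phi_{jk}$ are real-valued and real-analytic on $\mathbb{R}\setminus S_{1}$, the Schwarz reflection principle gives $f(\bar z)=\overline{f(z)}$ on $\mathbb{C}\setminus\mathbb{R}$; thus for $\tau<0$ we have $Im\,f(z)=-Im\,f(\bar z)$, which is negative definite, and the same argument shows $f(z)$ is invertible there as well. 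Hence $z\mapsto f(z)^{-1}$ is a well-defined holomorphic matrix-valued function on $\mathbb{C}\setminus\mathbb{R}$.

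Next I would compute $Im\bigl(f(z)^{-1}\bigr)$ directly. By Theorem~\ref{nice} the matrix $f(z)$ is symmetric, hence so is $f(z)^{-1}$, and $\overline{f(z)}=f(z)^{*}$, $\overline{f(z)^{-1}}=\bigl(f(z)^{-1}\bigr)^{*}$. From the identity $f(z)^{-1}-\overline{f(z)}^{-1}=f(z)^{-1}\bigl(\overline{f(z)}-f(z)\bigr)\overline{f(z)}^{-1}$ together with $\overline{f(z)}-f(z)=-2i\,Im\,f(z)$ we obtain
$$Im\bigl(f(z)^{-1}\bigr)=\frac{1}{2i}\Bigl(f(z)^{-1}-\overline{f(z)^{-1}}\Bigr)=-\,f(z)^{-1}\,\bigl(Im\,f(z)\bigr)\,\bigl(f(z)^{-1}\bigr)^{*}.$$
The right-hand side is a product of three invertible matrices — $f(z)^{-1}$ and $\bigl(f(z)^{-1}\bigr)^{*}$ are invertible by the previous paragraph, and $Im\,f(z)$ is invertible because it is definite — so $Im\,f^{-1}(\sigma+i\tau)$ is invertible, which is the assertion. (The displayed formula even shows it is negative definite when $\tau>0$ and positive definite when $\tau<0$.)

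I do not anticipate a serious obstacle. The only points needing care are that every property invoked — symmetry of $f$, the fact that all poles of the $f_{jk}$ lie on $\mathbb{R}$, and the sign of $Im\,f$ on each half-plane — is precisely what Theorem~\ref{nice} and the construction recalled in this section supply, and that the reflection identity $f(\bar z)=\overline{f(z)}$ is legitimate because $f$ is the meromorphic continuation of the real-analytic, real-valued functions $\phi_{jk}$. If one prefers to bypass Schwarz reflection, one may instead observe that conjugating the leaf parameter carries the relation $\eta_{k}|_{\mathbb{R}}=\sum_{j}\phi_{jk}\,\xi_{j}|_{\mathbb{R}}$ to itself, which again yields $f(\bar z)=\overline{f(z)}$.
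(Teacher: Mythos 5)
Your proof is correct, but it runs along a genuinely different line from the paper's. The paper argues by contradiction, following Proposition 6.8 of Lempert--Sz\H{o}ke: if $\mathrm{Im}\, f^{-1}(\sigma+i\tau)v=0$ for some nonzero real $v$, then $\omega:=f^{-1}(\sigma+i\tau)v$ is a real vector, and the relation $\Xi^{1,0}=H^{1,0}f^{-1}$ gives $\sum_j \xi_j^{1,0}v_j=\sum_k \eta_k^{1,0}\omega_k$; taking real parts produces a nontrivial real linear relation $\sum_j \xi_j v_j=\sum_k \eta_k\omega_k$ at a point off the real axis, contradicting the linear independence of the $2n$ vectors $\xi_j,\eta_k$ in Proposition \ref{linear}(2). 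That argument uses only the independence statements, needs no sign information, and treats the two half-planes symmetrically. You instead take Theorem \ref{nice} as the input: symmetry of $f$ and positive definiteness of $\mathrm{Im}\, f$ on $\mathbb{C}^+$ give invertibility of $f$ off $\mathbb{R}$, the Schwarz reflection identity $f(\bar z)=\overline{f(z)}$ (legitimate, since $f$ is the meromorphic continuation of the real-valued $\phi_{jk}$ and the two meromorphic functions agree on a set with accumulation points) handles the lower half-plane, and the congruence identity $\mathrm{Im}\,(f^{-1})=-f^{-1}(\mathrm{Im}\, f)(f^{-1})^{*}$ — where symmetry of $f$ is what lets you replace $\overline{f}^{-1}$ by $(f^{-1})^{*}$ — gives invertibility at once. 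Your route buys more than the lemma states: it shows $\mathrm{Im}\,(-f^{-1})$ is positive definite on $\mathbb{C}^+$ directly, which would render the perturbative expansion near $0$ and the nondegeneracy-plus-connectedness step in the proof of Lemma \ref{key9} unnecessary; the paper's route is preferable if one wants the lemma without invoking the full strength of Theorem \ref{nice}.
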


\begin{proof}
The proof is almost identical to the proof of Proposition 6.8 in \cite{Ls}. Suppose there is a nonzero column vector $v=(v_j) \in \mathbb{R}^n$ such that
$Im f^{-1}(\sigma + i \tau) v=0, \tau \neq 0$, i.e., $\omega=(\omega_k)=f^{-1}(\sigma+ i \tau) v \in \mathbb{R}^n$.
By Proposition \ref{linear}, $f^{-1}$ exists on $\mathbb{C}\setminus \mathbb{R}$. Then we have
$$\Xi^{1,0}= H^{1,0} f^{-1}$$
in the point $\sigma + i \tau$. Hence
$$\sum \xi_j^{1,0} v_j=\Xi^{1,0} v=H^{1,0} f^{-1}v=H^{1,0} \omega=\sum \eta_k^{1,0} \omega_k.$$
Taking real parts, we get
$$\sum \xi_j v_j=\sum \eta_k \omega_k,$$
in contradiction with Proposition \ref{linear}.

\end{proof}

\begin{lem} \label{key9}
$G(\zeta):=-f^{-1}(\zeta)$ is a matrix valued  meromorphic function on $\mathbb{C}$ whose pole lies in a discrete subset of $\mathbb{R}$
and $Im G(\zeta)$ is positive definite for $\zeta =\sigma+ i \tau \in \mathbb{C}^+$, where $\mathbb{C}^+$ is the upper half plane.
\end{lem}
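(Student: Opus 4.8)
The plan is to recognize Lemma \ref{key9} as an instance of the invariance of the Siegel upper half‑space of complex symmetric matrices with positive definite imaginary part under the inversion $Z\mapsto -Z^{-1}$; everything beyond this algebraic core is bookkeeping about meromorphy and the location of poles, and all of it is handed to us by Theorem \ref{nice} together with the Schwarz reflection principle.

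First I would dispose of meromorphy and of the location of the poles. Since $f(0)=0$ and $f'(0)=\mathrm{Id}$ by Theorem \ref{nice}, $f$ is holomorphic near $0$ with $\det f(\zeta)=\zeta^{n}+O(\zeta^{n+1})$, so $\det f\not\equiv 0$ and $G=-f^{-1}=-(\det f)^{-1}\,\mathrm{adj}(f)$ is again matrix valued meromorphic on $\mathbb{C}$. To confine its poles to a discrete subset of $\mathbb{R}$ it suffices to show that $f(\zeta)$ is invertible for every $\zeta\in\mathbb{C}\setminus\mathbb{R}$: then $\det f$ has no zeros off $\mathbb{R}$, and since the poles of $f$ already lie on $\mathbb{R}$, the function $f^{-1}$ is holomorphic on $\mathbb{C}\setminus\mathbb{R}$, while the pole set of a meromorphic function is automatically discrete. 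For $\zeta=\sigma+i\tau$ with $\tau>0$ I would write $f(\zeta)=A+iB$ with $A=\operatorname{Re}f(\zeta)$, $B=\operatorname{Im}f(\zeta)$; these real matrices are symmetric because $f(\zeta)$ is complex symmetric, and $B$ is positive definite by Theorem \ref{nice}. If $(A+iB)v=0$ with $v=p+iq$, then $0=\operatorname{Im}\bigl(\bar v^{\,T}(A+iB)v\bigr)=p^{T}Bp+q^{T}Bq$, which forces $p=q=0$; hence $f$ is invertible on $\mathbb{C}^{+}$. Because the $\phi_{jk}$ are real on $\mathbb{R}\setminus S_{1}$, the reflection principle gives $f(\bar\zeta)=\overline{f(\zeta)}$, so invertibility propagates to $\mathbb{C}^{-}$.

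Next I would establish positivity of $\operatorname{Im}G$ on $\mathbb{C}^{+}$. Keeping $f(\zeta)=A+iB$ with $B>0$, write $f(\zeta)^{-1}=X+iY$ and expand $(A+iB)(X+iY)=\mathrm{Id}$ into real and imaginary parts, obtaining $BX=-AY$ and $AX-BY=\mathrm{Id}$; eliminating $X$ yields $Y=-(AB^{-1}A+B)^{-1}$. Since $B^{-1}>0$ we have $AB^{-1}A\ge 0$, so $AB^{-1}A+B$ is positive definite and $\operatorname{Im}f(\zeta)^{-1}=Y$ is negative definite; therefore $\operatorname{Im}G(\zeta)=-Y=(AB^{-1}A+B)^{-1}$ is positive definite, which is precisely the assertion. (An alternative to the reflection step is to rerun the contradiction argument of Lemma \ref{nice0} via Proposition \ref{linear}, which would also reprove invertibility of $f$ on $\mathbb{C}^{-}$.)

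I do not expect a genuine obstacle: every ingredient is supplied by Theorem \ref{nice} (symmetry, the normalization $f(0)=0$, $f'(0)=\mathrm{Id}$, and $\operatorname{Im}f>0$) and by reflection, and the matrix identity for $\operatorname{Im}f^{-1}$ is elementary. The one point demanding care is verifying that $f$ never degenerates on $\mathbb{C}\setminus\mathbb{R}$—equivalently that the zeros of $\det f$ all lie on $\mathbb{R}$—which is exactly what positivity of $\operatorname{Im}f$ on $\mathbb{C}^{+}$, together with the reflection $f(\bar\zeta)=\overline{f(\zeta)}$, delivers. By Remark \ref{rem} one may even reduce the whole statement to the $(n-1)\times(n-1)$ ``normal'' block together with the trivial scalar entry $f_{nn}(\zeta)=\zeta$, whose contribution to $G$ is $-1/\zeta$ and visibly has positive imaginary part on $\mathbb{C}^{+}$.
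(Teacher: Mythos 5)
Your proposal is correct, but it follows a genuinely different route from the paper. For the location of the poles, the paper argues via the holomorphic frames: Proposition \ref{linear} gives that $\Xi^{1,0}$ and $H^{1,0}$ are nondegenerate off $\mathbb{R}$, and combined with $H^{1,0}=\Xi^{1,0}f$ this confines the poles of $G=-f^{-1}$ to a discrete subset of $\mathbb{R}$; you instead note $\det f\not\equiv 0$ from $f'(0)=\mathrm{Id}$ and prove invertibility of $f(\zeta)$ off $\mathbb{R}$ directly from $\operatorname{Im}f>0$ on $\mathbb{C}^+$ (Theorem \ref{nice}) together with the reflection $f(\bar\zeta)=\overline{f(\zeta)}$, which is legitimate since $f$ is real on $\mathbb{R}\setminus S_1$. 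For positive definiteness, the paper only uses the expansion $f(\zeta)=\zeta\,\mathrm{Id}+O(\zeta^2)$ to get $\operatorname{Im}G(i\tau)>0$ for small $\tau>0$, and then propagates this over the connected set $\mathbb{C}^+$ using the nondegeneracy of $\operatorname{Im}G$ from Lemma \ref{nice0} (itself a geometric argument via Proposition \ref{linear}); you instead invoke the full strength of $\operatorname{Im}f>0$ and the elementary Siegel half-space computation $\operatorname{Im}\bigl(-(A+iB)^{-1}\bigr)=(AB^{-1}A+B)^{-1}>0$ for $A,B$ real symmetric, $B>0$, which I checked and is correct. Your route is purely linear-algebraic once Theorem \ref{nice} is granted, bypasses Lemma \ref{nice0} and Proposition \ref{linear} altogether, and even yields an explicit formula for $\operatorname{Im}G$; the paper's route stays closer to the Lempert--Sz\H{o}ke framework, reusing the geometric nondegeneracy statements and needing only the local normalization of $f$ at $0$ rather than global positivity of $\operatorname{Im}f$. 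Both are complete proofs; your block-diagonal remark ($f=\mathrm{diag}(f_1,\,f_{nn})$ with $f_{nn}(\zeta)=\zeta$ by Remark \ref{rem}) is a harmless extra and is in fact the reduction the paper uses later for $f_1$.
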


\begin{proof}
Since $H^{1,0}$ and $\Xi^{1,0}$ are invertible on $\mathbb{C}$ except a discrete subset, combined with $H^{1,0}=\Xi^{1,0} f $,
we get that $G(\zeta)$ is a matrix valued  meromorphic function on $\mathbb{C}$ whose pole lies in a discrete subset of $\mathbb{R}$.
By Theorem \ref{nice}, we have $$f(0)=0, f'(0)=Id.$$
Then for small positive $\tau$, we get
$$Im \ G(i \tau)=Im (-f^{-1}(i \tau))$$
$$=Im \ (-(f(0)+ i \tau f'(0) + O(\tau^2))^{-1})$$
$$=Im \ (-i \tau Id +  O(\tau^2))^{-1}$$
$$=Im \ (\frac{i}{\tau} (Id + O(\tau)^{-1})).$$
Hence $Im \ G(i \tau)$ is positive definite for small positive $\tau$.  As
 $Im \ G(\zeta)$ is nondegenerate on $\mathbb{C}^+$ by Lemma \ref{nice0}, therefore
 $Im \ G(\zeta)$ is positive definite for $\zeta =\sigma+ i \tau \in \mathbb{C}^+$.

\end{proof}

Let $f_1=(f_{jk}), \ j, k=1, 2, \cdots, n-1.$ Then we have

\begin{lem} \label{key1}
There exists a discrete subset $S_2 \subset \mathbb{R}$ such that for $\sigma \in \mathbb{R}\setminus  S_2$, we have
\begin{equation} \label{ine2}
det(g(J_j (\sigma), J_k (\sigma))_{j, k=1,2, \cdots, n-1}=\frac{1}{det ((-f_1^{-1})'(\sigma))},
\end{equation}
 where $J_j, j=1, 2, \cdots, n$ are normal Jacobi fields along $\gamma$ with initial conditions
 $$J_j(0)=0, \ {J}'_j (0)=v_j$$
 and $v_1, v_2, \cdots, v_n:={\gamma}'(0)$ is an orthonormal basis of $T_x M$.

\end{lem}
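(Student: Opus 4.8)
The plan is to reduce everything to a determinant bookkeeping based on three ingredients already established: the factorization $H(\sigma)=\Xi(\sigma)f(\sigma)$, the identity $\Xi^{*}(\sigma)\Xi(\sigma)f'(\sigma)=\mathrm{Id}$ on $\mathbb{R}\setminus S_1$, and the normalization $f(0)=0$, $f'(0)=\mathrm{Id}$ from Theorem~\ref{nice}, all combined with the block structure furnished by Remark~\ref{rem}.

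First I would record that, by Remark~\ref{rem}, the matrices $f$ and $f'$ are block diagonal, $f=\mathrm{diag}(f_1,f_{nn})$ with $f_1=(f_{jk})_{j,k=1}^{n-1}$. Next I would note that $\xi_1|_{\mathbb{R}},\dots,\xi_{n-1}|_{\mathbb{R}}$ are normal Jacobi fields along $\gamma$ while $\xi_n|_{\mathbb{R}}$ is tangential, hence a function multiple of $\gamma'$; therefore $g(\xi_j(\sigma),\xi_n(\sigma))=0$ for $1\le j\le n-1$, and so the Gram matrix $\Xi^{*}(\sigma)\Xi(\sigma)=(g(\xi_a(\sigma),\xi_b(\sigma)))_{a,b=1}^{n}$ is block diagonal as well, $\Xi^{*}\Xi=\mathrm{diag}(P,|\xi_n|^2)$ with $P:=(g(\xi_j,\xi_k))_{j,k=1}^{n-1}$. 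Reading off the upper-left $(n-1)\times(n-1)$ block of $\Xi^{*}\Xi f'=\mathrm{Id}$ then gives $P f_1'=\mathrm{Id}$, i.e. $P=(f_1')^{-1}$.

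Then, since $f_{nk}\equiv 0$ for $k\le n-1$, the factorization $H=\Xi f$ restricted to $\mathbb{R}$ gives $J_k=\eta_k|_{\mathbb{R}}=\sum_{j=1}^{n-1}(f_1)_{jk}\,\xi_j|_{\mathbb{R}}$ for $1\le k\le n-1$, so that, using that $f_1(\sigma)$ has real entries on $\mathbb{R}$,
$$\big(g(J_j(\sigma),J_k(\sigma))\big)_{j,k=1}^{n-1}=f_1(\sigma)^{T}\,P(\sigma)\,f_1(\sigma).$$
Taking determinants and substituting $P=(f_1')^{-1}$ yields
$$\det\big(g(J_j(\sigma),J_k(\sigma))\big)_{j,k=1}^{n-1}=\frac{(\det f_1(\sigma))^{2}}{\det f_1'(\sigma)}.$$
On the other hand, differentiating $f_1 f_1^{-1}=\mathrm{Id}$ gives $(-f_1^{-1})'=f_1^{-1}f_1'f_1^{-1}$, hence $\det\big((-f_1^{-1})'\big)=\det f_1'/(\det f_1)^{2}$, which is exactly the reciprocal of the quantity just computed; this is the claimed identity.

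Finally I would take $S_2$ to be the union of $S_1$, the poles of $f_1$ in $\mathbb{R}$, and the zero set of $\det f_1$. On $\mathbb{R}\setminus S_1$ the vectors $\xi_1|_{\mathbb{R}},\dots,\xi_{n-1}|_{\mathbb{R}}$ are independent, so $P$ and hence $f_1'$ are invertible there, making all expressions above legitimate. The step to be careful with is not really an obstacle but a bookkeeping matter: verifying the block-diagonality of $\Xi^{*}\Xi$ (just orthogonality of normal and tangential Jacobi fields) and checking that $\det f_1\not\equiv 0$ so that $S_2$ is genuinely discrete — and the latter follows from the normalization, since $f_1(\sigma)=\sigma\,\mathrm{Id}+O(\sigma^{2})$ near $0$ gives $\det f_1(\sigma)\sim\sigma^{\,n-1}$, so $\det f_1$ is a nonzero meromorphic function and its zeros are isolated.
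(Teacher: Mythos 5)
Your proposal is correct and follows essentially the same route as the paper: it rests on the same three ingredients ($H=\Xi f$, $\Xi^{*}\Xi f'=\mathrm{Id}$, and the identity $(-f^{-1})'=f^{-1}f'f^{-1}$), the only cosmetic difference being that you pass to the $(n-1)\times(n-1)$ block at the outset via the orthogonality of normal and tangential Jacobi fields, whereas the paper computes $H^{*}H=f(f')^{-1}f=((-f^{-1})')^{-1}$ for the full matrix (using the symmetry of $f$) and extracts the block at the end. Your added verification that $\det f_1\not\equiv 0$, so that $S_2$ is genuinely discrete, is a nice touch that the paper leaves implicit.
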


\begin{proof}
Let $e_j$ be the standard orthonormal  basis of $\mathbb{R}^n$. As $f^{-1}(\sigma)$ exists on $\sigma \in \mathbb{R}\setminus  S_2$ for some discrete subset $S_2$,
then we get
$$g(J_j(\sigma), J_k (\sigma))=g(H(\sigma)e_j, H(\sigma)e_k)=\langle H^*(\sigma)H(\sigma)e_j, e_k \rangle.$$
Since $f(\sigma)$ is symmetric by Theorem \ref{nice}, combined with $H(\sigma)=\Xi(\sigma) f(\sigma)$ and $\Xi^*(\sigma)\Xi(\sigma)f'(\sigma)=Id$, we get
$$H^*(\sigma)H(\sigma)=(\Xi(\sigma)f(\sigma))^*\Xi(\sigma)f(\sigma)$$
$$=f(\sigma)\Xi^*(\sigma)\Xi(\sigma)f(\sigma)$$
$$=f(\sigma)(f'(\sigma))^{-1}f(\sigma)$$
$$=((-f^{-1})'(\sigma))^{-1}.$$

Since $f_{jn}=f_{nk}=0, \ j, k=1, 2, \cdots, n-1$, then we see that
$$det(g(J_j (\sigma), J_k (\sigma))_{j, k=1,2, \cdots, n-1}=\frac{1}{det ((-f_1^{-1})'(\sigma))}, \ \sigma \in \mathbb{R} \setminus S_2. $$
\end{proof}

The following  Fatou's representation theorem will be crucial for us.

\begin{prop} \label{fatou}
Let $F$ be an $n \times n$ matrix valued holomorphic function on the upper half plane $\mathbb{C}^+=\{{\xi \in \mathbb{C}| Im \ \zeta >0}\} \cup
(\mathbb{R} \setminus P)$, where $P$ is a discrete subset of $\mathbb{R}$ consisting of poles of $F$.
  Suppose that for every $\zeta \in \mathbb{C}^+, Im F(\zeta)$ is a symmetric, positive definite matrix, whereas for $\zeta \in \mathbb{R} \setminus P$, $Im F(\zeta)=0$.
Then there is an $n \times n$ symmetric matrix $\mu=(\mu_{jk})$ whose entries are real valued, signed Borel measures on $\mathbb{R}$ such that
\par $1^{\circ}$  $\mu_{jk}$ does not have mass on any interval which does not contain a pole of $F$;
\par $2^{\circ}$  $\int_{-\infty}^{+\infty} \frac{|d\mu_{jk}(t)|}{1+t^2}< \infty$;
\par $3^{\circ}$  $\mu$ is positive semidefinite in the sense that for any $(\omega_j) \in \mathbb{R}^n$, the measure $\sum\omega_j \omega_k \mu_{jk}$ is nonnegative;
\par $4^{\circ}$   $F'(\zeta)=A + \frac{1}{\pi} \int_{-\infty}^{+\infty} \frac{d\mu(t)}{(\zeta-t)^2}, \ \zeta \in \mathbb{C}^+$,
\par where $A$ is a symmetric, positive semidefinite constant matrix. In fact, we have
$A=\lim_{\tau \rightarrow +\infty} \frac{Im F(i\tau)}{\tau}$
and $d\mu(\sigma)$ is the weak limit of $Im F(\sigma+ i\tau)$ as $\tau \rightarrow 0^+$.
\end{prop}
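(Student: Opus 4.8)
The plan is to deduce the matrix statement from the classical scalar Herglotz--Nevanlinna representation theorem by testing $F$ against vectors of $\mathbb{R}^n$, and then to assemble the matrix measure by polarization. Recall the scalar fact to be used: if $h$ is holomorphic on $\mathbb{C}^+$ with $\operatorname{Im} h\geq 0$, then there are $\alpha\in\mathbb{R}$, $\beta\geq 0$ and a positive Borel measure $\rho$ on $\mathbb{R}$ with $\int_{\mathbb{R}}(1+t^2)^{-1}\,d\rho(t)<\infty$ such that $h(\zeta)=\alpha+\beta\zeta+\int_{\mathbb{R}}\bigl(\tfrac{1}{t-\zeta}-\tfrac{t}{1+t^2}\bigr)\,d\rho(t)$; moreover $\beta=\lim_{\tau\to\infty}\operatorname{Im} h(i\tau)/\tau$, the measures $\pi^{-1}\operatorname{Im} h(\,\cdot\,+i\tau)\,dx$ converge weakly to $\rho$ as $\tau\to 0^+$ (Stieltjes inversion), and in particular $\beta$ and $\rho$ are uniquely determined by $h$. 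Differentiating the representation gives $h'(\zeta)=\beta+\tfrac1\pi\int_{\mathbb{R}}\frac{d(\pi\rho)(t)}{(\zeta-t)^2}$.

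For $\omega\in\mathbb{R}^n$ I would set $h_\omega(\zeta):=\omega^{T}F(\zeta)\,\omega$. Since $\omega^{T}F\omega=\omega^{T}F^{T}\omega$, only the symmetric part of $F$ enters, and because $\operatorname{Im} F$ is symmetric, $\operatorname{Im}(F-F^{T})$ vanishes on the connected open set $\mathbb{C}^+$, so $F-F^{T}$ is a constant skew matrix; replacing $F$ by $(F+F^{T})/2$, which has the same imaginary part and the same derivative, we may assume $F=F^{T}$. Then $h_\omega$ is holomorphic on $\mathbb{C}^+\cup(\mathbb{R}\setminus P)$, with $\operatorname{Im} h_\omega=\omega^{T}(\operatorname{Im} F)\omega>0$ on $\mathbb{C}^+$ and $\operatorname{Im} h_\omega=0$ on $\mathbb{R}\setminus P$. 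The scalar theorem applied to each $h_\omega$ produces $\beta_\omega\geq 0$ and a positive measure $\rho_\omega$. I then define $A=(A_{jk})$ and $\mu=(\mu_{jk})$ by $A_{jj}:=\beta_{e_j}$, $\mu_{jj}:=\pi\rho_{e_j}$, and for $j\neq k$, $A_{jk}:=\tfrac12(\beta_{e_j+e_k}-\beta_{e_j}-\beta_{e_k})$, $\mu_{jk}:=\tfrac\pi2(\rho_{e_j+e_k}-\rho_{e_j}-\rho_{e_k})$; symmetry of $F$ makes $A_{jk}=A_{kj}$ and $\mu_{jk}=\mu_{kj}$ automatic.

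It then remains to verify $1^\circ$--$4^\circ$. Using $F_{jk}=\tfrac12(h_{e_j+e_k}-h_{e_j}-h_{e_k})$ for $j\neq k$ (and $F_{jj}=h_{e_j}$), the differentiated scalar identity applied to $h_{e_j}$, $h_{e_k}$, $h_{e_j+e_k}$ gives exactly $F'_{jk}(\zeta)=A_{jk}+\tfrac1\pi\int_{\mathbb{R}}\frac{d\mu_{jk}(t)}{(\zeta-t)^2}$, which is $4^\circ$, and $A=\lim_{\tau\to\infty}\operatorname{Im} F(i\tau)/\tau$, $d\mu(\sigma)=\lim_{\tau\to0^+}\operatorname{Im} F(\sigma+i\tau)$ are the entrywise versions of the scalar identifications. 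For $3^\circ$, contracting the last identity with a general $\omega$ gives $\sum_{j,k}\omega_j\omega_k F'_{jk}=h'_\omega=\beta_\omega+\tfrac1\pi\int\frac{d(\pi\rho_\omega)}{(\zeta-t)^2}$, and uniqueness in the scalar representation forces $\sum_{j,k}\omega_j\omega_k\mu_{jk}=\pi\rho_\omega\geq 0$ and $\sum_{j,k}\omega_j\omega_kA_{jk}=\beta_\omega\geq 0$; so $\mu$ is positive semidefinite and $A$ symmetric positive semidefinite. Property $2^\circ$ follows from $|\mu_{jk}|\leq\tfrac\pi2(\rho_{e_j+e_k}+\rho_{e_j}+\rho_{e_k})$ together with the scalar integrability bound. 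For $1^\circ$, if $I$ is an open interval with $I\cap P=\emptyset$ then $F$ is holomorphic on a neighbourhood of $I$ and real there, so $\operatorname{Im} F(\,\cdot\,+i\tau)\to 0$ uniformly on compact subsets of $I$ as $\tau\to 0^+$, hence every $\rho_\omega$, and therefore every $\mu_{jk}$, puts no mass on $I$.

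The only step that is more than bookkeeping is the passage from scalar to matrix: one must ensure that the scalar measures $\rho_\omega$ fit together consistently into a single positive-semidefinite matrix-valued measure $\mu$. This is precisely where the uniqueness part of the scalar Herglotz--Nevanlinna theorem (Stieltjes inversion) is needed, since it identifies $\omega\mapsto\pi\rho_\omega$ with the quadratic form $\omega\mapsto\sum_{j,k}\omega_j\omega_k\mu_{jk}$; once that is in place, all four conclusions drop out of the corresponding scalar statements.
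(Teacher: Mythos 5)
Your proof is correct, but it is worth noting that the paper does not actually argue this proposition at all: its ``proof'' is a citation to Koosis and to Proposition 7.4 of Lempert--Sz\H{o}ke, with a one-line remark that the holomorphic extension of $F$ across $\mathbb{R}\setminus P$ is what yields property $1^{\circ}$. Your route is a self-contained derivation: reduce to the classical scalar Herglotz--Nevanlinna representation by testing against vectors $\omega$, recover the matrix data $A,\mu$ by polarization, and use Stieltjes inversion to identify the quadratic form $\omega\mapsto\sum\omega_j\omega_k\mu_{jk}$ with $\pi\rho_\omega$, which gives $3^{\circ}$ and also handles $1^{\circ}$ exactly as the paper's remark intends (uniform decay of $\operatorname{Im}F(\cdot+i\tau)$ on compact subintervals away from the poles kills the mass there). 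Two small points deserve care but do not create gaps: (i) your reduction to symmetric $F$ is legitimate because $\operatorname{Im}(F-F^{T})\equiv 0$ on the connected set $\mathbb{C}^{+}$ forces $F-F^{T}$ to be a real constant skew matrix, which changes neither $\operatorname{Im}F$ nor $F'$; (ii) in the $3^{\circ}$ step, ``uniqueness of the representation'' for the differentiated formula with an a priori signed measure is most safely justified exactly as you do in your closing paragraph, via the weak-limit (Stieltjes inversion) identification rather than via uniqueness for positive measures. What the paper's citation buys is brevity and reliance on an already-stated matrix version; what your argument buys is independence from the matrix-valued literature, needing only the scalar Fatou/Herglotz theorem, at the cost of the polarization bookkeeping you carry out.
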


\begin{proof}
See \cite{K} and Proposition 7.4 in \cite{Ls}. The only difference is that we require $F$ has a holomorphic extension to $\mathbb{R} \setminus P$,
hence we get that  $\mu_{jk}$ does not have mass on any interval which does not contain a pole of $F$.
\end{proof}

Now we are going to finish the proof of Theorem \ref{main}. Applying Proposition \ref{fatou} to the matrix valued holomorphic function $(-f_1^{-1})$ on the upper half plane, we get
\begin{equation} \label{int}
 (-f_1^{-1})'(\zeta)=A + \frac{1}{\pi} \int_{-\infty}^{+\infty} \frac{d\mu(t)}{(\zeta-t)^2}, \ \zeta \in \mathbb{C}^+,
\end{equation}
where $A=(a_{jk})$ is a symmetric, positive semidefinite constant matrix and $\mu$ is a $n \times n$ positive semidefinite symmetric matrix  whose entries are real valued, signed Borel measures on $\mathbb{R}$.
 By analytic continuation, equation \ref{int} also holds on $ \mathbb{R}$ except a discrete subset. Moreover,
  $\mu$ does not have mass on any interval  which does not contain a pole of $-f_1^{-1}$. This yields that
 $$(-f_1^{-1})'(\sigma)=A + \frac{1}{\pi} \sum_j \frac{\mu(t_j)}{(\sigma-t_j)^2},  \ \sigma \in \mathbb{R} \setminus \{{t_1, t_2, \cdots}\},$$
 where $\{{t_1, t_2, \cdots}\}$ are poles of $-f_1^{-1}$. As $f(0)=0$, we see that $0$ is pole of $-f_1^{-1}.$
\par
\begin{lem} \label{pi}
$$\mu(0)=\pi Id$$
\end{lem}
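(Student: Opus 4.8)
The plan is to read off $\mu(0)$ from the local behavior of $-f_1^{-1}$ near the pole at $0$, using the Laurent expansion of $f$ at $0$ provided by Theorem \ref{nice}. By Theorem \ref{nice} we have $f(0)=0$ and $f'(0)=\mathrm{Id}$, and since $f_{jn}=f_{nk}\equiv 0$ for $1\le j,k\le n-1$ (Remark \ref{rem}), the same normalization passes to the $(n-1)\times(n-1)$ block: $f_1(0)=0$ and $f_1'(0)=\mathrm{Id}$. Hence near $\sigma=0$ we may write $f_1(\sigma)=\sigma\,\mathrm{Id}+O(\sigma^2)$, so $f_1(\sigma)$ is invertible for small $\sigma\ne0$ with $f_1^{-1}(\sigma)=\frac{1}{\sigma}\mathrm{Id}+O(1)$, i.e. $-f_1^{-1}$ has a simple pole at $0$ with residue $-\mathrm{Id}$.

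Next I would compare this with the partial-fraction representation already derived from Proposition \ref{fatou}, namely
$$(-f_1^{-1})'(\sigma)=A+\frac{1}{\pi}\sum_j\frac{\mu(t_j)}{(\sigma-t_j)^2},\qquad \sigma\in\mathbb{R}\setminus\{t_1,t_2,\dots\},$$
where $0$ is among the poles $t_j$. Integrating term by term, the contribution of the pole at $0$ to $-f_1^{-1}$ itself is $-\frac{1}{\pi}\cdot\frac{\mu(0)}{\sigma}$ (the other poles and the linear term $A\sigma$ contribute functions regular at $0$, and one checks there is no constant-term obstruction by matching both sides away from all poles and isolating the principal part at $0$). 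Comparing the two expressions for the principal part of $-f_1^{-1}$ at $0$ gives $-\frac{1}{\pi}\mu(0)=-\mathrm{Id}$, that is, $\mu(0)=\pi\,\mathrm{Id}$.

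To make the residue comparison rigorous I would argue directly: from $(-f_1^{-1})'(\sigma)=\frac{d}{d\sigma}\big(-\frac1\sigma\mathrm{Id}+O(1)\big)=\frac{1}{\sigma^2}\mathrm{Id}+O(1/\sigma)$ near $0$ (differentiating the Laurent expansion, whose $O(1)$ part is actually real-analytic hence differentiable), and from the Fatou formula $(-f_1^{-1})'(\sigma)=\frac{1}{\pi}\frac{\mu(0)}{\sigma^2}+\big(\text{terms bounded near }0\big)$, I equate the coefficients of $\sigma^{-2}$ to conclude $\mu(0)=\pi\,\mathrm{Id}$. The main obstacle is purely bookkeeping: one must be sure that the $O(\sigma^2)$ remainder in $f_1(\sigma)=\sigma\,\mathrm{Id}+O(\sigma^2)$ is genuinely real-analytic near $0$ (so that inverting and differentiating do not spoil the $\sigma^{-2}$ coefficient), and that among the $t_j$ only $t_j=0$ contributes to the double pole at $0$ — both of which follow from the meromorphy of $f_1^{-1}$ (Lemma \ref{key9}) and the fact, already noted in the text, that $\mu$ is supported on the pole set of $-f_1^{-1}$. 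No deeper input is needed.
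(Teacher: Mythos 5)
Your proof is correct, but it extracts $\mu(0)$ by a different mechanism than the paper. The paper computes $\mu(0)$ directly from the last clause of Proposition \ref{fatou} (that $d\mu(\sigma)$ is the weak limit of $\mathrm{Im}\,F(\sigma+i\tau)$ as $\tau\to0^+$): it writes $\mu(0)=\lim_{\delta\to0^+}\lim_{\tau\to0^+}\int_{-\delta}^{\delta}\mathrm{Im}\,(-f_1^{-1}(\sigma+i\tau))\,d\sigma$, expands $f_1(\zeta)=\zeta\,\mathrm{Id}+O(\zeta^2)$ using $f(0)=0$, $f'(0)=\mathrm{Id}$ from Theorem \ref{nice}, and reduces everything to the Poisson-kernel integral $\int_{-\delta}^{\delta}\frac{\tau}{\sigma^2+\tau^2}\,d\sigma\to\pi$. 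You instead take the identity $(-f_1^{-1})'(\sigma)=A+\frac{1}{\pi}\sum_j\frac{\mu(t_j)}{(\sigma-t_j)^2}$, already derived in the text just before the lemma, and match the coefficient of $\sigma^{-2}$ at $0$ against the Laurent expansion $(-f_1^{-1})'(\sigma)=\sigma^{-2}\,\mathrm{Id}+O(1)$, which comes from the same Taylor data (and from the block-diagonality of Remark \ref{rem}, which you correctly invoke to pass from $f$ to $f_1$). Both routes hinge on Theorem \ref{nice}; yours avoids the double limit and the Poisson computation, at the cost of needing that the tail $\sum_{t_j\neq0}\mu(t_j)(\sigma-t_j)^{-2}$ stays bounded near $0$ so that only $t_j=0$ feeds the double pole. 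That is true, but it uses not just discreteness of the poles and the support statement $1^{\circ}$ (which is all you cite) but also the integrability condition $2^{\circ}$, together with $3^{\circ}$ to control off-diagonal entries by diagonal ones, to get local uniform convergence of the sum away from the poles; with that small point cited explicitly, your coefficient-matching argument is a complete and somewhat more algebraic alternative to the paper's boundary-measure computation.
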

\begin{proof}
By Proposition \ref{fatou}, we get
$$\mu(0)=\lim_{\delta\rightarrow 0^+} \mu(-\delta, \delta)=\lim_{\delta\rightarrow 0^+} \lim_{\tau \rightarrow 0^+} \int_{-\delta}^{\delta} Im (-f_1^{-1}(\sigma+ i\tau)) d\sigma$$
$$=\lim_{\delta\rightarrow 0^+} \lim_{\tau \rightarrow 0^+} \int_{-\delta}^{\delta} Im (-(f_{jk}(0)+ f'_{jk}(0)(\sigma+ i \tau) + O(\sigma + i \tau)^2)^{-1}_{1 \leq j,k \leq n-1}) d\sigma$$
$$=\lim_{\delta\rightarrow 0^+} \lim_{\tau \rightarrow 0^+} \int_{-\delta}^{\delta} Im (-((\sigma+ i \tau)Id + O(\sigma + i \tau)^2)^{-1}) d\sigma$$
$$=\lim_{\delta\rightarrow 0^+} \lim_{\tau \rightarrow 0^+} \int_{-\delta}^{\delta} Im (-\frac{1}{\sigma+ i \tau} (Id+ O(\sigma + i \tau))^{-1}) d\sigma$$
$$=\lim_{\delta\rightarrow 0^+} \lim_{\tau \rightarrow 0^+} \int_{-\delta}^{\delta} Im (-\frac{1}{\sigma+ i \tau} Id+ O(1)) d\sigma$$
$$=\lim_{\delta\rightarrow 0^+} \lim_{\tau \rightarrow 0^+} \int_{-\delta}^{\delta} \frac{\tau}{\sigma^2+ \tau^2}  d\sigma \ Id $$
$$=\pi \ Id.$$

\end{proof}
Given Lemma \ref{pi}, then we have
$$(-f_1^{-1})'(\sigma)=\frac{1}{{\sigma}^2} Id + B, $$
where $B=A + \frac{1}{\pi} \sum_{t_j \neq 0} \frac{\mu(t_j)}{(\sigma-t_j)^2}$ is positive semidefinite.

\begin{lem} \label{min200}
Let $ A_1, A_2$ be two $k\times k$ Hermitian positive semidefinite complex matrix, then
$$det (A_1 +A_2) \geq det A_1 + det A_2.$$
\end{lem}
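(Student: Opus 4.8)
The plan is to prove the matrix inequality $\det(A_1+A_2)\ge \det A_1+\det A_2$ for Hermitian positive semidefinite $k\times k$ matrices by first reducing to the positive definite case and then diagonalizing one of the two matrices. First I would observe that the general positive semidefinite case follows from the positive definite one by a standard limiting argument: replace $A_i$ by $A_i+\epsilon I$, which is positive definite, prove the inequality there, and let $\epsilon\to 0^+$, using continuity of the determinant. So from now on assume $A_1$ is positive definite.

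Next I would use simultaneous reduction. Since $A_1>0$, write $A_1 = S S^*$ for some invertible $S$ (e.g. $S=A_1^{1/2}$). Then
\[
\det(A_1+A_2) = \det(S)\det(S^*)\det\!\bigl(I + S^{-1}A_2 (S^{*})^{-1}\bigr) = \det(A_1)\det(I+B),
\]
where $B := S^{-1}A_2(S^*)^{-1}$ is Hermitian positive semidefinite (it is congruent to $A_2$). Let $\lambda_1,\dots,\lambda_k\ge 0$ be the eigenvalues of $B$. Then $\det(I+B) = \prod_{i=1}^k (1+\lambda_i)$, and expanding the product gives $\prod_i(1+\lambda_i) \ge 1 + \prod_i \lambda_i = 1 + \det B$, since all the intermediate elementary-symmetric terms are nonnegative. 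Multiplying through by $\det A_1 > 0$ yields
\[
\det(A_1+A_2) = \det(A_1)\det(I+B) \ge \det(A_1)\bigl(1+\det B\bigr) = \det A_1 + \det(A_1)\det(B) = \det A_1 + \det A_2,
\]
using $\det A_2 = \det(S B S^*) = \det(A_1)\det(B)$ in the last step.

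The only real point requiring care — the "main obstacle," though it is minor — is justifying the inequality $\prod_{i=1}^k(1+\lambda_i)\ge 1+\prod_{i=1}^k\lambda_i$ cleanly; this is immediate from the nonnegativity of every term in the binomial-type expansion $\prod_i(1+\lambda_i) = \sum_{T\subseteq\{1,\dots,k\}}\prod_{i\in T}\lambda_i$, where the empty set contributes $1$ and the full set contributes $\prod_i\lambda_i$. I would also double-check that $B$ is genuinely Hermitian positive semidefinite: congruence $A_2\mapsto S^{-1}A_2(S^*)^{-1}$ preserves Hermitian symmetry and, by Sylvester's law of inertia (or directly, $v^*Bv = w^*A_2 w$ with $w=(S^*)^{-1}v$), preserves positive semidefiniteness. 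With these observations the proof is complete.
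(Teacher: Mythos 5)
Your proof is correct, and it takes a different route from the paper. The paper disposes of the lemma in one line by citing the Minkowski determinant theorem from Marcus--Minc, namely $(\det(A_1+A_2))^{1/k}\ge(\det A_1)^{1/k}+(\det A_2)^{1/k}$, and the additive inequality follows by raising both sides to the $k$-th power and discarding the nonnegative cross terms. You instead give a self-contained elementary argument: perturb to make $A_1$ positive definite, factor $A_1=SS^*$, pass to the congruent matrix $B=S^{-1}A_2(S^*)^{-1}$, and reduce everything to the scalar inequality $\prod_i(1+\lambda_i)\ge 1+\prod_i\lambda_i$ for nonnegative eigenvalues, which is immediate from the expansion into elementary symmetric terms. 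All the supporting steps you flag (Hermitian positive semidefiniteness of $B$ under congruence, $\det A_2=\det A_1\det B$, continuity in the limit $\epsilon\to 0^+$) are handled correctly. What your approach buys is independence from an external reference and from the strictly stronger superadditivity of $\det^{1/k}$ — indeed your congruence reduction is essentially the standard proof of Minkowski's theorem, stopped at the weaker statement actually needed. What the paper's approach buys is brevity: a single citation. Either proof is adequate for the role the lemma plays in bounding $1/\det((-f_1^{-1})'(\sigma))$ by $\sigma^{2n-2}$.
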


\begin{proof}
It follows from the Minkowski determinant theorem (page 115, \cite{ma}):
$$(det (A_1 + A_2))^{1/k} \geq (det A_1)^{1/k} + (det A_2)^{1/k}.$$
\end{proof}
By Theorem \ref{nice}, we get that $f(\sigma+ i \tau)$ is a symmetric matrix, so is $-f_1^{-1}(\sigma+ i \tau)$.
By Proposition \ref{fatou}, we see that $A$ and $\mu(t_j)$ are real valued symmetric positive semidefinite matrix. By Lemma \ref{min200}, we get
$$\frac{1}{det  ((-f_1^{-1})'(\sigma))} \leq  {\sigma}^{2n-2}.$$

\par By Theorem \ref{gromov0} and Lemma \ref{key1}, we see
$$\int_M n_T (x, y) dy \leq p(T), $$
where $p(T)$ is a polynomial of degree at most $n$. By Theorem \ref{gro}, $\sum_{j=0}^{k-1} dim H_j (\Omega M, F)$ has polynomial growth
for any field of coefficients. It follows that
 $M$ is topologically elliptic.

 \par To illustrate the idea of the above proof, we give two examples here. Let $M$ be a $n$-dimensional compact manifold of constant sectional curvature $c$.
 From the proof of Theorem 2.5 in \cite{Sz},  we have
  $$ f_1(\sigma+ i \tau)=(\sigma + i \tau) \ Id,  \ c=0$$
  $$ f_1(\sigma+ i \tau)= (tg (\sigma+ i \tau)) \ Id, \ c=1.$$

\par Case 1: When $c=0$, then $-f_1^{-1}(\sigma+ i \tau)=(-\frac{1}{\sigma+ i \tau}) \ Id$.  Hence
$$(-f_1^{-1})'(\sigma)=\frac{1}{\sigma^2} \ Id.$$
 Let $F(\sigma+ i \tau):=-f_1^{-1}(\sigma+ i \tau).$
 In this case, the matrix $A$ and measure $\mu$ in Proposition \ref{fatou} can be computed by
$$A=\lim_{\tau \rightarrow +\infty} \frac{Im F(i\tau)}{\tau}=0$$
$$\mu(0)=\lim_{\delta\rightarrow 0^+} \mu(-\delta, \delta)=\lim_{\delta\rightarrow 0^+} \lim_{\tau \rightarrow 0^+} \int_{-\delta}^{\delta} Im F(\sigma+ i\tau)d\sigma=\pi \ Id.$$
Then $\int_M n_T (x, y) dy $ has polynomial growth of degree $n$.
\par Case 2: When $c=1$, then $-f_1^{-1}(\sigma+ i \tau)=(-ctg (\sigma+ i \tau)) \ Id$.  Hence
$$(-f_1^{-1})'(\sigma)=\frac{1}{sin^2 (\sigma)} \ Id.$$
Let $F(\sigma+ i \tau):=-f_1^{-1}(\sigma+ i \tau).$ In this case, the matrix $A$ and measure $\mu$ in Proposition \ref{fatou} can be computed by
$$A=\lim_{\tau \rightarrow +\infty} \frac{Im F(i\tau)}{\tau}=0$$
$$\mu(j\pi)\equiv \mu(0)=\lim_{\delta\rightarrow 0^+} \mu(-\delta, \delta)=\lim_{\delta\rightarrow 0^+} \lim_{\tau \rightarrow 0^+} \int_{-\delta}^{\delta} Im F(\sigma+ i\tau)d\sigma=\pi \ Id, \ j \in \mathbb{Z}.$$
Then $\int_M n_T (x, y) dy $ has linear growth.

\end{document}